\def\di{\rightarrow}
\def\R{{\mathbb  R}}
\def\N{{\mathbb  N}}
\def\({\textup{(}}
\def\){\textup{)}}
\def\RCA{\textup{\textsf{RCA}}}
\def\ACA{\textup{\textsf{ACA}}}
\newtheorem{definition}{Definition}
\newtheorem{theorem}{Theorem}
\newtheorem{corollary}[theorem]{Corollary}
\newtheorem{proposition}[theorem]{Proposition}
\newtheorem{exa}[theorem]{Example}
\renewcommand\tagform@[1]{\maketag@@@{\ignorespaces#1\unskip\@@italiccorr}}
\title[From real analysis to the sorites]{\vspace*{-1.5cm} From real analysis to the sorites paradox via reverse mathematics}
\author{Walter Dean}
\author{Sam Sanders}
\begin{document}
%\tableofcontents
\thispagestyle{empty}
\newpage
\begin{abstract}{This paper presents a reverse mathematical analysis of several forms of the sorites paradox.  We first illustrate how traditional formulations are reliant on H\"older's Representation Theorem for ordered Archimedean groups.  While this is provable in $\mathsf{RCA}_0$, we also consider two forms of the sorites which rest on non-constructive principles: the \textsl{continuous sorites} of \citet{Weber2010} and a variant we refer to as the \textsl{covering sorites}.  We show in the setting of second-order arithmetic that the former depends on the existence of suprema and thus on arithmetical comprehension ($\mathsf{ACA}_0$) while the latter depends on the Heine-Borel Theorem and thus on Weak K\"onig's Lemma ($\mathsf{WKL}_0$).  We finally illustrate how recursive counterexamples to these principles provide resolutions to the corresponding paradoxes which can be contrasted with supervaluationist, epistemicist, and constructivist approaches.
}\end{abstract}
\maketitle

%To do

%1) check if the equivalence between COVER and  [LCC_inf + LCC_sup] is helpful in the below. 
%
%Clearly, COVER implies [LCC_inf + LCC_sup], essentially by definition.  
%
%Now assume [LCC_inf + LCC_sup] and fix x_0 such that Phi(x_0).  Suppose that COVER is false ÔupwardÕ (the other direction being symmetric), i.e.\ for all eps>0 there is y in (x_0, x_0 +\eps) with \neg Phi(y).  
%
%Form the set X_0  as {z in R : z\geq x_0 AND \neg Phi(z)}.  Then x_0 = inf X_0 and by LCC_inf, we must have \neg Phi(x_0), a contradiction.  
%
%Thus, using nothing more than Colyvan-Weber, we obtain [LCC_inf + LCC_sup] <=> COVER.  
%

\section{Introduction}\label{S1}

Metaphysics, ethics, philosophy of language, linguistics, psychology, decision theory, and economics all have  interests in the phenomenon of vagueness and the resolution of the sorites paradox.  Within these subjects, several distinct argument forms are also recognized as soritical paradoxes -- e.g.\ the conditional sorites, the mathematical induction sorites, the identity sorites, the line drawing sorites, etc.\footnote{See, e.g., \citep{Oms2019} on both interdisciplinary interest and argument forms.}

This paper will suggest that both lists ought to be extended.  We will do so in the course of providing a mathematical analysis of the so-called \textsl{continuous sorites} as well as formulating a novel form which we will refer to as the \textsl{covering sorites}.  Both forms arise naturally for predicates such as \textsl{short} or \textsl{orange} which are commonly described as applying to \textsl{concrete} or \textsl{perceptual continua}.  

The reasoning of the continuous sorites is mediated by what we will refer to as a \textsl{continuous tolerance principle}.  Such principles attempt to formalize the slogan
\begin{example}
\label{whewell}
\textsl{What is true} up to \textsl{the limit, holds} at \textsl{the limit}.
\end{example}
Principles of this sort figured intermittently in the historical development of real analysis.  Indeed we will see that the mathematical structure of the continuous sorites argument -- which purports to show that if a vague predicate holds at the left endpoint of an interval with such a continuum, then it must hold at its right endpoint -- was repeatedly anticipated from the 17th to the 20th century.   

On the other hand, the argument appears to have only recently been put forth explicitly as a \textsl{paradox} by \citet{Weber2010}.   They follow \citet{Priest2006b} in attributing (\ref{whewell}) to Leibniz and also suggesting that the principle holds for many vague predicates in natural language.   As we will see below, however, it is only in the presence of additional mathematical principles ensuring the completeness of the real numbers -- e.g.\ in the form \textsl{every bounded subset of the reals has a least upper bound} -- that the reasoning in question yields a formal contradiction.   

The covering sorites is similarly mediated by a tolerance principle that ``sufficiently close'' points within continua must either both satisfy a vague predicate or fail to satisfy it.  It too purports to show that if a vague predicate  holds at the left endpoint of an appropriate interval it must hold at its right endpoint.   But we will see that the intermediate reasoning is again mediated by a mathematical principle -- i.e.\ the compactness of the unit interval -- in a manner which was also repeatedly anticipated in early proofs of what we now call the \textsl{Heine-Borel Theorem}.  

We begin our navigation of this material in \S \ref{S2} by considering the setting of the sorites arguments in relation to \textsl{measurement theory} (in the sense of \citealp{Krantz1971}).   We will use this as a basis for reconstructing the continuous sorites in \S \ref{S3} and also for isolating the mathematical principles on which it depends in \S \ref{S4}.  In \S \ref{S5} we show that similar (but mathematically weaker) principles are required to mediate the covering sorites.  In \S \ref{S6} we will use the tools of \textsl{reverse mathematics} (in the sense of \citealp{Simpson2009}) to assess the strength of the principles which are needed to mediate the continuous and the covering sorites.  Our major finding will be that not only do both forms of the argument \textsl{depend} on set existence principles which have been traditionally regarded as non-constructive but their major premises are in fact \textsl{equivalent} to such assumptions (respectively to the systems $\mathsf{ACA}_0$ and $\mathsf{WKL}_0$).    
 
In \S \ref{S7} we refine these results via \textsl{recursive counterexamples} which show how ``gaps'' can arise at what would otherwise correspond to the sharp boundaries of vague predicates.   We will consider how such constructions illuminate supervaluationist, epistemicist, and constructivist approaches to vagueness. In \S\ref{S8}  we briefly indicate how reverse mathematics opens novel avenues for responding to sorites arguments in virtue of the features of the continuum on which they rely.

\section{The sorites and measurement theory}\label{S2}

The idea underlying many sorites arguments is that vague predicates are naturally understood to apply to the elements of structures $A$ describable as either concrete or perceptual \textsl{continua} -- points on concrete lines or transitions between colors or sounds.    These are taken to admit to so-called \textsl{indiscriminable differences} --   a change in the relevant magnitude which cannot be directly perceived or measured -- and are thus regarded as unable to affect whether a point in $A$ satisfies the vague predicate.  But the accumulation of sufficiently many such differences \emph{can} lead to a discriminable difference resulting in a change in the truth value.   

The properties of the structures in question are typically illustrated via examples rather than specified axiomatically -- e.g.\ by taking the vague predicate $\Phi_0(x)$ to be \textsl{short} and $A$ to contain a sequence of objects varying in height or $\Phi_0(x)$ to be \textsl{orange} and $A$ to be a color spectrum.   But of course the possibility of formulating a sorites-like argument for a given predicate will depend on the  principles which the relevant structures are assumed to satisfy.  This is paradigmatically true of the continuous sorites whose formulation is based on the fundamental assumption that
\begin{quote}
\begin{spacing}{.9}
{\footnotesize
[$A$ can be] mapped onto a real-number interval $[0,1]$, exhaustively partitioned into nonempty sets $X = \{x \in [0,1] : \Phi(x)\}$ and $Y = \{x \in [0,1] : \neg \Phi(x)\}$ with $x < y$ for all $x \in X, y \in Y$. \citep[p. 315]{Weber2010}}%
\end{spacing}
\end{quote}
\vspace{-2ex}

Here $\Phi(x)$ is assumed to be the image of $\Phi_0(x)$ in the real numbers under the envisioned mapping -- e.g.\ of the real numbers measuring points on a ruler corresponding to the heights of short individuals or the wavelengths of orange positions in the spectrum.  We thus see from the outset that the formulation of the forms of the sorites which will be relevant here are reliant on representation in a mathematical domain.  But once this is acknowledged, we can also inquire into what must be assumed about the extension of $\Phi_0(x)$ in $A$ to ensure the existence of a function $\phi : A \rightarrow \mathbb{R}$ with properties allowing a given variant of the sorites to go through.   

%%\smallskip

Reflection convinces us that the sorts of predicates which figure in soritical arguments typically come along with an associated order $\prec$ on $A$ -- e.g.\ \textsl{shorter} (or \textsl{strictly less tall}) or \textsl{oranger} (or \textsl{strictly less red}).  Such relations typically figure in so-called \textsl{penumbral connections} (see \citealp{Fine1975}) involving $\Phi_0(x)$ -- e.g.\ that if $a$ is short and $a$ is taller than $b$, then $b$ is also short or symbolically
\begin{example}
$\forall a , b \in A[(\Phi_0(a) \wedge b \prec a) \rightarrow \Phi_0(b)]$
\label{downward}
\end{example}
It is this sort of property which is presumed to be preserved by the aforementioned kind of mapping $\phi$ of $A$ into the real numbers -- e.g.\
\begin{example}
\label{presprec}
$\forall a , b \in A[a \prec b  \rightarrow \phi(a) < \phi(b)]$
\end{example}

It thus becomes apparent that \textsl{measurement theory} in the tradition of \citet{Krantz1971} provides a useful -- and by our lights necessary -- framework for taking into account the relationship between non-mathematical continua $A$ and the real numbers.   The importance of this subject to  theorizing about vagueness is now widely acknowledged.\footnote{See, e.g., \citep[\S 5]{Keefe2000}.}  To articulate the relation  the continuous sorites bears to its better-known discrete counterparts we will review some of its basic conventions.

Measurement theory seeks to provide an account of conditions under which a given non-mathematical structure $\mathcal{A}$ can be represented by some mathematical structure $\mathcal{M}$ such that abstract reasoning about the latter can be used to facilitate drawing concrete conclusions about the former.   Here ``structure'' should be understood in the logician's sense of \textsl{model}.  $\mathcal{A}$ is thus specified as a tuple $\langle A, R_1, \ldots,R_n,  f_1, \ldots, f_m \rangle$ consisting of a domain $A$ relations $R_i$ and functions $f_i$ -- e.g.\ as described by $\prec$ above or the concatenation operation $\circ$ considered below.   These are assumed to satisfy a set of axioms $\Gamma$ which specify the properties of the non-mathematical domain -- e.g.\ that $\prec$ is a linear order or that $\circ$ is associative.   

One of the central aims of measurement theory is to provide an account of when it is possible to establish the existence of a structure preserving mapping, called  a \emph{homomorphism}, from an arbitrary structure $\mathcal{A}$ satisfying $\Gamma$ to a prescribed sort of mathematical structure $\mathcal{M} = \langle \mathbb{M}, S_1, \ldots,S_n,  g_1, \ldots, g_m \rangle$.   In the prototypical cases we will consider,  $\mathbb{M}$ will be the real numbers $\mathbb{R}$ or a subfield thereof and $S_i$ and $f_i$ will thus respectively be relations on $\mathbb{R}^{j_i}$ and functions of type $\mathbb{R}^{k_i} \rightarrow \mathbb{R}$.  A result establishing that there exists a mapping preserving the relevant structure is known as a \textsl{representation theorem} and takes the following form:
\begin{example}
\label{rt}
If the model $\mathcal{A}$ satisfies the axioms $\Gamma$, there exists $\phi:A \rightarrow \mathbb{R}$ such that for all $\vec{a} \in A^{j_i}$, if $R_i(\vec{a})$, then $S_i(\phi(\vec{a}))$ and for all $\vec{a} \in B^{k_i}$, $\phi(f(\vec{a})) = g(\phi(\vec{a}))$. 
\end{example}

Theorizing about vagueness often implicitly assumes the availability of a representation theorem for the predicates in question.  This is paradigmatically so for what we will refer to as \textsl{vague measure predicates} -- i.e.\ adjectives such as \textsl{short},  or \textsl{brief}, \textsl{lightweight} and physical or psychological magnitudes which ``admit to degrees''.\footnote{More broadly the class of relevant terms subsume what linguists refer to as \textsl{gradable adjectives} -- e.g.\ \textsl{slow}, \textsl{cold}, \textsl{orange}, \textsl{sour}, \textsl{difficult}, \textsl{poor}, et cetera.} This is illustrated by the following prototypical soritical argument:\footnote{This is adapted from \citep[p. 7]{Keefe2000}.}

\begin{quote} {\footnotesize
\begin{spacing}{.7}
Intuitively, a millimeter cannot make a difference to whether or not a man counts as tall -- such tiny variations, undetectable using the naked eye and everyday measuring instruments $\ldots$ So we have the principle
\vspace{1ex}
\begin{example}
\label{tolshort}
If $a$ is short, and $b$ is only 1 millimeter taller than $a$, then $b$ is also short.
\end{example}
\vspace{1ex}
But imagine a line of men, starting with someone 1.2 meters tall, and each of the rest a millimeter taller than the man in front of him. Repeated applications of (\ref{tolshort}) $\ldots$ imply that each man we encounter is short, however far we continue. And this yields a conclusion which is clearly false, namely that a man 
greater than 2.2 meters in height, reached after one thousand steps $\ldots$ is also short.
\end{spacing}
}
\end{quote}
\vspace{-2ex}
Several hallmarks of measurement theory are apparent in this and many other similar formulations. This includes a choice of an \textsl{empirical unit} $c$ -- i.e.\ a millimeter -- which is assumed to be less than a so-called \textsl{just-noticeable difference} $d$ for length measurement.  It also includes the specification of a domain $A = \{a_0, \ldots, a_{1000}\}$ of items whose heights are measurable relative to $c$ and which are assumed to be listed in order from shortest to tallest with $a_{i+1}$ being exactly one millimeter taller than $a_i$.  The existence of a measurement function $\phi: A \rightarrow \mathbb{R}^{+}$ is then assumed which preserves the \textsl{shorter} relation $\prec$ in the manner required by (\ref{presprec}) and is such that $\phi(c) = 0.001$ (e.g.\ one millimeter $= \frac{1}{1000}$ meters) and $\phi(c) < \phi(d)$.   

The choice of a specific $c \prec d$, the decision to measure $c$ relative to a particular system of units,  and the choice of a particular unit within that system to represent the length of $c$ are all arbitrary.   But if we wish to formalize the prior passage, then the selection of \textsl{some} specific magnitude and corresponding numerical value for its length is required.    For it is only once such a value has been designated that we are able to state a \textsl{tolerance principle} like (\ref{tolshort}).  This may then be formalized as follows:
\begin{example}
\label{tol1}
$\forall a ,b \in A[(|\phi(a) - \phi(b)| \leq \phi(c) \wedge \Phi_0(a)) \rightarrow \Phi_0(b)]$
\end{example}

To state such a principle also requires that we employ a symbol for the measurement function in our object language description.   This is in fact prototypical of our everyday discourse about various sorts of magnitudes -- e.g.\ when we speak of someone being ``1.2 meters tall''.   The statement of (\ref{tol1}) also shows that the envisioned reasoning is dependent on our ability to refer to ``tiny variations'' to which a vague measure predicate like \textsl{short} is tolerant.  What is assumed here is thus that the domain from which the elements comprising the envisioned sorites sequence are selected contains a sufficiently short object to play the role of a unit.

This in turn illustrates how the \textsl{additive} properties of the relevant mathematical scale also play a role in the envisioned reasoning.   For it is only \textsl{after} such an object has been selected as a unit $c$ shorter than $d$ that we are able to assert that the objects $a_0,\ldots,a_{1000}$ stand in the appropriate relation -- i.e.\ that they are such that the height of object $a_{i+1}$ is the same as that of a compound object resulting from laying $a_i$ end-to-end with $c$.\footnote{\label{archnote1} Descriptions of sorites sequences implicitly assume the existence not only of such a magnitude but also sufficiently many of its ``multiples'' to support the preparation of the relevant sequence of individuals, paint chips, et cetera.  Going forward we thus assume $A$ contains the elements $c, c \circ c, c \circ (c \circ c), \ldots$ whose distinctness is guaranteed by the positivity condition of Theorem \ref{holderthm0}.}   Suppose we introduce `$\circ$' to denote this concatenation operation and `$a \sim b$' to denote the empirical \textsl{same length} relation defined as 
\begin{example}
$a \sim b$ if and only if  \textsl{not} $a \prec b$ and \textsl{not} $b \prec a$
\end{example}
We must thus have that $a_{i+1} \sim a_{i} \circ c$ for all $0 \leq i < 1000$. This is required to ensure that $a_i$ and $a_{i+1}$ stand in the indiscriminability relation and also that $a_{1000}$ has been reached via transitions ``adding up to'' a discriminable difference.\footnote{\label{archnote2} It often seems to be assumed that $\langle \mathcal{A},\prec,\circ \rangle$ is a \textsl{ratio scale} -- i.e.\ the homomorphism between $\mathcal{A}$ and   $\langle \mathbb{R}^+,< , + \rangle$ is determined up to a scalar constant as in the second part of Theorem \ref{holderthm0} below.  If we define $c^0 = c$ and $c^{n+1} = c \circ c^n$, it is then permissible to say that the magnitude of $c^{1000}$ -- i.e.\ a 1 meter magnitude -- is indeed ``$1000$ times as long'' as $c$ itself.   But if we start out by simply declaring that $b \in A$ denotes a \textsl{discriminable} length (i.e.\ $b \succeq d$) -- e.g.\ coinciding with the difference in height between the objects at the beginning and end of a sorites sequence -- then we must also assume that $\mathcal{A}$ satisfies the Archimedean property to ensure that a finite number of copies of $c$ concatenated with the object at the beginning of the series ``add up to or surpass'' $b$.} 

For the soritical derivation to have its intended force, we must thus ensure that $\phi$ not only preserves the structure of the \textsl{shorter} relation $\prec$ in the sense of (\ref{presprec}) but also that of the empirical concatenation operation $\circ$ -- i.e.\ we  additionally require
\begin{example}
\label{prescirc}
$\forall a , b \in A[\phi(a \circ b) = \phi(a) + \phi(b)]$
\end{example}

This illustrates how even familiar forms of the sorites presuppose mathematical representation.\footnote{This point is developed systematically in \citep{Dean2018a} wherein it is argued that both discrete and continuous forms of the sorites implicitly rely on features of mathematical notation (and related Archimedean assumptions) which can be rejected for reasons similar to those discussed in \S\ref{S8}.}  But here the existence of an appropriate measurement function is ensured by a seminal result of measurement theory known as \textsl{H\"older's Theorem}.\footnote{See \citep{Krantz1971} on the significance of this result in the development of measurement theory, its relation to extensive measurement, and the original presentation of \citet{Holder1901a}.}
\begin{theorem} 
\label{holderthm0}
Suppose the structure $\mathcal{A} = \langle A,\prec,\circ \rangle$ satisfies the axioms of an ordered, positive, regular, Archimedean semi-group.   Then for any fixed $c \in A$ and positive real $r \in \mathbb{R}^+$ there exists a homomorphism from $\mathcal{A}$ to $\mathcal{R} = \langle \mathbb{R}^+,<,+\rangle$ mapping $c$ to $r$ -- i.e.\ a function $\phi: A \rightarrow \mathbb{R}^{+}$ satisfying $($\ref{presprec}$)$, $($\ref{prescirc}$)$, and $\phi(c) = r$.   Moreover if $\psi:A \rightarrow \mathbb{R}^+$ is another function satisfying these properties, then there exists $v \in \mathbb{R}^+$ such that $\psi(a) = v \cdot \phi(a)$ for all $a \in A$.   
\end{theorem}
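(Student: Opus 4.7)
The plan is to follow the classical Dedekind-cut construction for ordered Archimedean semi-groups. First I would reduce to the case $r = 1$: once a homomorphism $\phi_0$ with $\phi_0(c) = 1$ is constructed, the rescaling $\phi(a) := r \cdot \phi_0(a)$ yields one with $\phi(c) = r$, and the same scaling observation will deliver the uniqueness-up-to-scalar clause. Write $n \cdot a$ for the iterated concatenation $a \circ a \circ \cdots \circ a$ ($n$ factors); positivity makes this strictly $\prec$-increasing in $n$, regularity gives the needed cancellation, and the Archimedean axiom guarantees that for any $a, b \in A$ there is some $n$ with $n \cdot a \succeq b$.

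Second, for each $a \in A$ I would form the set of positive rationals
\[
L(a) = \{ m/n \in \mathbb{Q}^+ : m \cdot c \prec n \cdot a \},
\]
together with its counterpart $U(a) = \{ m/n : n \cdot a \prec m \cdot c \}$ (rationals witnessing $n \cdot a \sim m \cdot c$ are assigned to $L(a)$). The Archimedean hypothesis applied to $c$ and $a$ makes $L(a)$ bounded above and $U(a)$ nonempty, while positivity ensures $L(a)$ is nonempty. One checks that $L(a)$ is downward-closed in $\mathbb{Q}^+$, so that $\langle L(a), U(a)\rangle$ defines a Dedekind cut, and I would set $\phi(a) := \sup L(a) \in \mathbb{R}^+$. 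Direct unfolding of the definition of $L(c)$ then yields $\phi(c) = 1$.

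Third, I would verify the homomorphism properties. Order preservation follows from regularity: if $a \prec b$, an application of the Archimedean axiom to the ``gap'' between $n \cdot a$ and $n \cdot b$ for sufficiently large $n$ produces a rational lying strictly between $\phi(a)$ and $\phi(b)$. The main obstacle is the additivity identity
\[
\phi(a \circ b) = \phi(a) + \phi(b).
\]
The approach is to show $L(a) + L(b) \subseteq L(a \circ b)$ and the analogous inclusion for the upper cuts: combining $m_1 \cdot c \prec n \cdot a$ with $m_2 \cdot c \prec n \cdot b$ (after passing to a common denominator) should give $(m_1 + m_2) \cdot c \prec n \cdot (a \circ b)$, once one knows $n \cdot (a \circ b) \sim (n \cdot a) \circ (n \cdot b)$. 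This last identity requires commutativity of $\circ$, which for a positive, regular, Archimedean semi-group is a well-known but nontrivial consequence of the axioms, proved by an interchange argument that repeatedly exploits Archimedean solvability. Taking suprema on both sides then pins down the desired equality.

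Finally, for the uniqueness clause, suppose $\psi : A \to \mathbb{R}^+$ also satisfies (\ref{presprec}) and (\ref{prescirc}). Then $\psi(n \cdot a) = n \cdot \psi(a)$, so the empirical comparison $m \cdot c \prec n \cdot a$ is transported by $\psi$ to the numerical inequality $m \cdot \psi(c) < n \cdot \psi(a)$. Hence $L(a)$ is determined purely by the value $\psi(c)$, whence $\psi(a) = \psi(c) \cdot \sup L(a) = (\psi(c)/\phi(c)) \cdot \phi(a)$, giving the constant $v = \psi(c)/\phi(c) \in \mathbb{R}^+$. I expect the delicate step throughout to be establishing commutativity and the exchange identity $n \cdot (a \circ b) \sim (n \cdot a) \circ (n \cdot b)$; everything else is bookkeeping with Dedekind cuts, the only genuinely set-theoretic ingredient being the existence of $\sup L(a)$, which relies on the completeness of $\mathbb{R}$.
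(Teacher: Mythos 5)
Your Dedekind-cut construction is a classically correct route to H\"older's Theorem, and you correctly identify commutativity of $\circ$ as the genuinely delicate hidden lemma. But it differs from the approach the paper takes, and the difference matters for the paper's larger argument. The paper's proof sketch (in the footnote to Proposition~\ref{holderrca0}, following Krantz and Solomon) builds $\phi(a)$ directly as a Cauchy sequence with an \emph{explicit} exponential modulus of convergence: one chooses a decreasing family of subunits $u_k$ with $u_{k+1}\circ u_{k+1}\preceq u_k$, sets $N(b,a)$ to be the least $n$ with $(n{+}1)\cdot b \succ a$ (which exists by the Archimedean axiom), and defines $\phi(a)$ as $\lim_k N(u_k,a)/N(u_k,u_0)$, a sequence satisfying the modulus~(\ref{mod2k}). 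That is precisely the form in which reals are coded in $L_2$, which is why Proposition~\ref{holderrca0} can assert provability in $\mathsf{RCA}_0$. Your closing remark --- that ``the only genuinely set-theoretic ingredient [is] the existence of $\sup L(a)$, which relies on the completeness of $\mathbb{R}$'' --- is where the two approaches diverge conceptually. Read literally, that would suggest the theorem needs Dedekind completeness and hence $\mathsf{ACA}_0$ (cf.\ Theorem~\ref{klop}), which is exactly what Proposition~\ref{holderrca0} is designed to rule out. In fact your cuts $L(a)$, $U(a)$ are \emph{located}: because $\prec$ is given as a decidable relation, one can decide for each rational $m/n$ which side it lies on, so the supremum can be extracted by effective bisection without any appeal to completeness. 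If you reframe your last step that way, your proof becomes a legitimate (if more roundabout) alternative to the paper's; as written, the appeal to completeness is a red flag the paper is careful to avoid. The uniqueness argument and the reduction to $r=1$ are fine, though ``$L(a)$ is determined purely by $\psi(c)$'' should read ``$\psi(a)$ is determined by $\psi(c)$ together with $L(a)$.''
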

That $\mathcal{A}$ is an ordered semigroup means that $\prec$ is a linear order on $A$,  $\circ$ is associative, and that $x \preceq y$ implies $z \circ x \preceq z \circ y$ and $x \circ z \preceq y \circ z$.  Positivity requires that $x \prec x \circ y$ (and thus there is no identity element). Regularity requires that if $x \prec y$, then there is $z$ such that $x \circ z \preceq y$.  The Archimedean property requires that
\begin{example}\label{arch}
{For all $a \in A$ with $a \prec b$, there is $n \in \mathbb{N}$ such that $b \preceq a \circ \ldots \circ a$ ($n$ times).   }
\end{example}
These conditions are satisfied in cases of \textsl{extensive measurement} which are often in the background of sorites arguments (see \citealp[\S 3]{Krantz1971}).

Such an argument is conventionally formulated as a series of conditionals:
\begin{example}
\label{fm} {\small $\Phi_0(\texttt{a}_0), \Phi_0(\texttt{a}_0) \rightarrow \Phi_0(\texttt{a}_1), \Phi_0(\texttt{a}_1), \ldots, \Phi_0(\texttt{a}_{999}), \Phi_0(\texttt{a}_{999}) \rightarrow \Phi_0(\texttt{a}_{1000}), \Phi_0(\texttt{a}_{1000})$}
\end{example}
Herein the conclusion $\Phi_0(\texttt{a}_{1000})$ is intended to express that the 1000th individual in the envisioned series is \textsl{short}.   But to derive this conclusion we must confirm that each of the conditionals $\Phi_0(\texttt{a}_i) \rightarrow \Phi_0(\texttt{a}_{i+1})$ are true.   This in turn requires not the only the truth of the tolerance principle (\ref{tol1}) but also that each of the empirical statements  $|\phi(\texttt{a}_i) - \phi(\texttt{a}_{i+1})| \leq 0.001$ for $0 \leq i < 1000$ be assumed as  premises of the argument.   But the later statements employ a symbol for the measurement function $\phi:A \rightarrow \mathbb{R}^+$ satisfying (\ref{presprec}) and (\ref{prescirc}) which in turn is only guaranteed to exist in virtue of H\"older's Theorem.   We thus reach the intermediate conclusion -- to which we will return in \S \ref{S6} -- that not only Theorem \ref{holderthm0} but also the mathematical principles required to prove it must be \textsl{counted amongst the premises} of the conditional sorites.  

\section{From the discrete to the continuous sorites}\label{S3}

The foregoing is illustrative of how formulations of traditional forms of the sorites paradox are often implicitly reliant on mathematical representation.   
This is true to an even greater extent in the case of the continuous and covering sorites.   Inspiration for the former derives from the observations that had we elected to formulate (\ref{tol1}) using a \textsl{smaller} choice of the unit $c$ --  e.g.\ a micrometer (i.e.\ $10^{-6}$ meters), a nanometer (i.e.\ $10^{-9}$ meters), etc. -- our confidence in the corresponding version of this form of tolerance principle would presumably \textsl{increase}.  This comes into clearest focus when we examine versions of the sorites  involving not just discrete sequences of objects drawn from non-mathematical continua but the continua  themselves.

%\footnote{Of course our ability to select such objects is premised on the assumption that the objects in which sorites sequences for measure predicates are selected is contained within a larger domain  containing sufficiently small objects with respect to the relevant ordering $\prec$.  What is thus assumed is the existence of a series of objects $e = e_0 \succ e_1 \succ e_2 \succ \ldots$ for which $(e_i)^i \preceq e$ for all $i \in \mathbb{N}$.   This is in fact what is guaranteed by the regularity requirement on $\mathcal{A}$ in Theorem \ref{holderthm0} which is a weak form of the ``no minimal element'' assumptions employed in traditional formulations of H\"older's Theorem.  Such an assumption also seems implicit in most philosophical treatments of sorites argues for predicates like \textsl{short}.  This comes into clearest focus when we examine versions of the sorites  involving not just discrete sequence of objects drawn from non-mathematical continua but from various sorts of continua  themselves.   

%%\smallskip

These are exemplified by predicates describing temporally or spatially extended regions which are said to have ``fuzzy boundaries'' -- e.g.\ a period of time asserted to contain \textsl{the moments of  mid-morning} or a region asserted to contain the \textsl{points on Mount Everest}. A related class of cases concerns what are often explicitly referred to as \textsl{perceptual continua} -- e.g.\ a crescendo from \textsl{quiet} to \textsl{loud tones} or a spectrum containing wavelengths varying from a paradigm shade of \textsl{orange} to one of \textsl{red}. 

In such cases the empirical or perceptual domain $A$ may be assumed to contain a subinterval $C = \{c \in A : a \preceq c \preceq b\}$  consisting of the points ``between'' the endpoints $a,b \in A$ in the relevant sense -- e.g.\ the points on a line between the summit of Mount Everest and a sufficiently distant point or between shades in an orange-red spectrum.   In such cases we typically think that the predicate in question holds of the left endpoint -- i.e.\ $\Phi_0(a)$ -- and fails to hold of the right endpoint -- i.e.\ $\neg \Phi_0(b)$.   But such a description does not provide us with a sequence of objects from $C$ by which we can discretely step through the sort of reasoning described above.

To formulate a sorites argument in such cases, it thus seems that we have little choice but to assume the existence of an appropriate measurement function $\phi:A \rightarrow \mathbb{R}$ for which $\phi(a) = u$ and $\phi(b) = v$.  In such cases it is also typically assumed that the image of $C$ under $\phi$ is a closed interval $[u,v] := \{x \in \mathbb{R} : u \leq x \leq v\}$ in the real numbers.\footnote{E.g.\ ``Imagine a patch darkening continuously from white to black. At each moment during the process the patch is darker than it was at any earlier moment. Darkness comes in degrees. The patch is dark to a greater degree than it was a second before, even if the difference is too small to be discriminable by the naked eye. Given that there are as many moments in the interval of time as there are real numbers between 0 and 1, there are at least as many degrees of darkness as there are real numbers $[\in [0,1]]$, an uncountable infinity of them.'' \citep[p. 113]{Williamson1994}}   But although we may also feel that $\Phi_0(x)$ is insensitive to small variations, we will often be unaware of a relevant magnitude in $d \in A$ to serve as a just noticeable difference or of its numerical image.\footnote{One possible means of circumventing this is to posit the existence of \textsl{some} magnitude which we assume is ``smaller'' than $d$ in the relevant sense.  The corresponding version of (\ref{tol1}) would take the form $(\exists e \in A)( \forall a , b \in A)[(|\phi(a) - \phi(b)| \leq \phi(e) \wedge \Phi_0(a)) \rightarrow \Phi_0(b)]$.    However this is not sufficient to derive a contradiction from the other soritical assumptions in virtue of the existence of nonstandard models in which the existence of such a magnitude is witnessed by an infinitesimal.}  The question thus arises how we can formulate a tolerance principle which can be regarded as part of the meaning $\Phi_0(x)$ in a manner which still gives rise to a deductively mediated contradiction.   

One way of avoiding the artificiality of choosing a specific $c$ is to maintain that what we in fact accept is that if $\Phi_0(x)$ is vague and holds of $a \in A$, then $\Phi_0(x)$ also holds of points $b \in A$ which are ``vanishingly close'' to $a$.  Or we may abandon the metrical connotations of ``close'' by asserting that if $\Phi_0(x)$ is vague and $b$ is within a ``neighborhood'' of a point $a$ satisfying $\Phi_0(x)$, then $b$ also satisfies $\Phi_0(x)$.  

The terms \textsl{neighborhood} and cognates like \textsl{region} or \textsl{vicinity} evidently express topological concepts.  But it is still not obvious how to formulate a tolerance principle which takes advantage of any of these notions.  To do so, Weber \& Colyvan rely on the assumption that the real numbers satisfy \textsl{Dedekind completeness} -- i.e.\ 
\begin{itemize}
\label{dc}
\item[DC:] Suppose $X \subseteq \mathbb{R}$,  $X \neq \emptyset$ and $X$ has an \textsl{upper bound} -- i.e.\ there is $y \in \mathbb{R}$ such that $\forall x(x \in U \rightarrow x \leq y)$.  Then there exists a $u \in \mathbb{R}$ which is the \textsl{supremum} (or \textsl{least upper bound}) of $X$ -- i.e.\ $\forall x(x \in U \rightarrow x \leq u)$ and if $v$ an is upper bound of $X$, then $u \leq v$.   In this case we write $u = \sup(X)$.  
\end{itemize}
In general a bounded set need not contain its supremum. But if $u = \sup(X)$ then $u$ is as close as possible to points in $X$ in the metrical sense -- i.e.\ $\forall \varepsilon > 0 \exists x \in X(|\sup(X)-x| < \varepsilon)$.  And any neighborhood of $u$ in the Euclidean topology on $\mathbb{R}$ -- i.e.\ $(s,t) := \{x \in \mathbb{R} : s < x < t\}$ with $u \in (s,t)$ -- will also contains points in $X$.  

%%\smallskip

In order to formulate a tolerance principle which exploits this property of the real numbers, Weber \& Colyvan presuppose that the field $A$ of $\Phi_0(x)$ has been mapped in the manner we have been discussing into a closed subinterval of $\mathbb{R}$ which they take to be $[0,1]$.     Their proposed tolerance principle is then formulated for a mathematical predicate $\Phi(x)$ which is understood as the image of $\Phi_0(x)$ in $\mathbb{R}$ under a suitable measurement function $\phi(x)$ -- i.e.\ 
\begin{example}
\label{defPhi}
$\Phi(x) := \exists a \in A(\Phi_0(a) \wedge \phi(a) = x)$.
\end{example}
Writing  $\mathrm{BA}(X)$ to abbreviate that a set $X$ is \textsl{bounded above}, we can now state Weber \& Colyvan's proposed tolerance principle as follows:\footnote{This formulation differs from that given in \citep[p. 315]{Weber2010} by making the condition that $X$ has a least upper bound explicit in the antecedent (which in addition to $X \neq \emptyset$ is required to ensure that $\sup(X)$ is a denoting term).  In initially assuming that $X$ varies over subsets of $\mathbb{R}$ rather than sequences we have also followed the treatment in \citep[p. 243]{Weber2021}.}
\begin{equation*}
\label{lcc0}
\tag{$\mathrm{LCC}_{\sup}(\Psi):$}  \forall X \subseteq \mathbb{R}(X \neq \emptyset \ \& \ \mathrm{BA}(X) \ \& \ \forall x(x \in X \rightarrow \Psi(x)) \rightarrow \Psi(\sup(X)))
\end{equation*} 
Weber \& Colyvan refer to this as the \textsl{Leibniz Continuity Condition} [LCC] and suggest that it holds for $\Phi(x)$ obtained via (\ref{defPhi}) from a vague predicate $\Phi_0(x)$.  We will have more to say about its motivation in \S\ref{S4}.  But for the moment we will focus on reconstructing its role in mediating a sorites-like contradiction.  

%%\smallskip

Suppose  $\mathcal{A}$ satisfies the hypotheses of Theorem \ref{holderthm0} and also the following hold:
\begin{example}
\label{contconds}
\begin{enumerate}[i)]
\item $a,b \in A$ and $a \prec b$
\item There exists $\phi:A \rightarrow \mathbb{R}^+$ satisfying the conclusion of Theorem \ref{holderthm0} such that $\phi(a) = 0$ and $\phi(b) = 1$ and also that $\phi$ is \textsl{onto} $[0,1]$ -- i.e.\ $\forall x \in [0,1]\exists a \in A (\phi(a) = x)$.
\item $\Phi_0(a)$ and thus $\Phi(0)$
\item $\neg \Phi_0(b)$ and thus $\neg \Phi(1)$
\end{enumerate}
\end{example}
Inspection of the proof of H\"older's Theorem ensures that $\phi(x)$ can also be constructed to satisfy the first part of (\ref{contconds}ii).\footnote{That $\phi$ is onto $[0,1]$ is \textsl{not} guaranteed by Theorem \ref{holderthm0}.   The existence of such a measurement function must thus be taken as an additional premise of the argument.   But this appears to be Weber \& Colyvan's intention as they describe $A$ as having been ``mapped \textsl{onto} $\ldots$ $[0,1]$'' such that it is ``\textsl{exhaustively} partitioned'' by $\Phi(x)$ in the manner of \ref{csarg}iii \citeyearpar[p. 315, our emphasis]{Weber2010}.}   Part i), iii), and iv) are traditional soritical premises.    The goal is then to use $\mathrm{LCC}_{\sup}(\Phi)$ to show that if $\Phi(x)$ holds of $0$ then it holds of $1$.  But unlike (\ref{tol1}), $\mathrm{LCC}_{\sup}(\Phi)$ does not allow us to extend the set of points satisfying $\Phi(x)$ rightward by any fixed increment.  Thus despite the similarity in form between these principles, there is no apparent way of harnessing this principle to mimic the reasoning (\ref{fm}) of the conditional sorites.

There are indeed some subtleties in selecting principles sufficient to derive a contradiction from the premises (\ref{contconds}) in conjunction with $\mathrm{LCC}_{\sup}(\Psi)$. One complication is that the argument presented by Weber \& Colyvan requires not just  $\mathrm{LCC}_{\sup}(\Psi)$ but also an analogous principle about lower bounds:\footnote{In the traditional development of analysis it is possible to show that the existence of suprema implies the existence of infima (see, e.g., \citealp[p. 5]{Rudin1976}).   On the other hand, there is no apparent way of \textsl{deriving} $\mathrm{LCC}_{\inf}$ from $\mathrm{LCC}_{\sup}$ absent other assumptions.}
\begin{equation*}
\label{lcc1}
\tag{$\mathrm{LCC}_{\inf}(\Psi)$:}
\forall X \subseteq \mathbb{R}(X \neq \emptyset \ \& \ \mathrm{BB}(X)  \ \& \ \forall x(x \in X \rightarrow \Psi(x)) \rightarrow \Psi(\inf(X)))
\end{equation*} 
Here $\mathrm{BB}(X)$ abbreviates that $X$ has a lower bound -- i.e.\ $\exists y(\forall z \in X \rightarrow y \leq z)$ -- and $\inf(X)$ denotes its \textsl{infimum} (i.e.\ \textsl{greatest lower bound}).

%%\smallskip

Adopting this assumption their argument can now be set out as follows:\footnote{This reconstructs \citep[p. 315]{Weber2010} where the reasoning is attributed to an unpublished paper of James Chase.  \citep[pp. 243-244]{Weber2021} also presents a formulation based on a paraconsistent axiomatization of analysis to which we will briefly return in note \ref{paranote}.}  
\begin{example}
\label{csarg}
\begin{enumerate}[i)]
\item Assume $\mathrm{LCC}_{\sup}(\Phi)$ and $\mathrm{LCC}_{\inf}(\Phi)$ for a vague measure predicate $\Phi$.
\item Assume for a contradiction that $\neg \Phi(1)$.
\item Define the following subsets of real numbers 
\begin{center} $U = \{x \in [0,1] : \Phi(x)\}$ and $V = \{x \in [0,1] : \neg \Phi(x)\}$ 
\end{center}
\item We obtain the following from i), property (\ref{downward}) of $\prec$, and (\ref{contconds}i-iii):
\begin{enumerate}[a)]
\item $U \neq \emptyset$ and $U$ is bounded above.
\item $V \neq \emptyset$ and $V$ is bounded below.
\item $U \cup V = [0,1]$ and $U \cap V = \emptyset$.
\end{enumerate}
\item By iv a), DC entails that $\sup(U) := u$ exists.  Thus $\Phi(u)$ by the definition of $U$ and $\mathrm{LCC}_{\sup}$ applied to $\Phi(x)$
\item By iv b), DC entails that $\inf(V) := v$ also exists.   Thus $\neg \Phi(v)$ by the definition of $V$ and $\mathrm{LCC}_{\inf}$ applied to $\neg \Phi(x)$
\item By the trichotomy property of $<$ on $\mathbb{R}$ one of the following must hold: $u = v$; $u < v$; $v < u$.  We now reason by cases as follows:
\begin{enumerate}[a)]
\item If $u = v$, then $\Phi(u)$ and $\neg \Phi(u)$ by v) and vi).   Contradiction.   
\item If $u < v$, then by the density of $<$ on $\mathbb{R}$, there exists $w$ such that $u < w < v$.   Since $w < v$, $w \in U$ and thus $\Phi(w)$.   But since $u < w$, $w \in V$ and thus $\neg \Phi(w)$.   Contradiction.
\item If $v < u$, a contradiction in reached analogously with b).
\end{enumerate}
\item Discharging the reductio assumption ii), we conclude $\Phi(1)$.   
\end{enumerate}
\end{example}
The foregoing constitutes a demonstration of $\Phi(1)$ from the premises (\ref{contconds}i-iii), the two forms of the LCC and Dedekind Completeness.  This then yields a contradiction with the other soritical assumption $\neg \Phi(1)$ (premise \ref{contconds}iv).   It is this overarching reasoning which we henceforth refer to as the \textsl{continuous sorites argument}.\footnote{Weber \& Colyvan also present a related argument which they refer to as \textsl{topological sorites} which is based on the \textsl{connectedness} of the spaces in question rather than their completeness.   While they ultimately suggest that this is a yet more basic form of the paradox, we will not treat it explicitly here for three reasons.  First \textsl{pace} Weber \& Colyvan, we are dubious that vague predicates in natural language come  with topologies which are given independently of metrics induced via the techniques of measurement theory.  Second, even without this assumption (and using a using the pre-topological notion of a \textsl{vicinity space}) \cite{Dzhafarov2019} shows that an argument similar to Weber \& Colyvan's requires a principle equivalent to $\mathsf{ACA}_0$ over $\mathsf{RCA}_0$ in context of reverse mathematics.   We will present a related third consideration in note \ref{topnote2} below. \label{topnote}}

\section{The Leibniz Continuity Condition and Open Induction}\label{S4}

We have presented the continuous sorites argument as showing that $\Phi(1)$ follows from $\Phi(0)$ and the two forms of the LCC.  But a similar argument may be used to show that these assumptions entail that \textsl{all} real numbers $x \in \mathbb{R}$ fall under $\Phi(x)$. Indeed in analogy to mathematical induction on $\mathbb{N}$ we have the following:   
\begin{proposition}
\label{doi}
Let $\Psi(x)$ be a predicate of real numbers and suppose that $\{x\in \R: \Psi(x)\}$ is a non-empty open set in the Euclidean topology on $\mathbb{R}$ -- i.e.\ $\exists x \Psi(x)$ and
\begin{example}
\label{open}
$\forall x(\Psi(x) \rightarrow \exists \varepsilon > 0 \forall y(x - \varepsilon < y < x + \varepsilon \rightarrow \Psi(y)))$
\end{example}
Suppose also that $\Psi(x)$ is closed downward -- i.e.\ 
\begin{example}
\label{down} 
$\forall x (\forall y(\Psi(y) \ \wedge \ x < y) \rightarrow \Psi(x))$
\end{example}
-- and satisfies 
\begin{example}
\label{progr} 
$\forall x (\forall y(y < x \rightarrow \Psi(y)) \rightarrow \Psi(x))$
\end{example}
Then $\Psi(x)$ holds for all real numbers -- i.e.\ $\forall x \in \mathbb{R}\hspace{.4ex} \Psi(x)$.
\end{proposition}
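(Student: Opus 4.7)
The plan is a direct proof by contradiction that leans on Dedekind Completeness (DC from \S\ref{S3}) to locate a supremum at which a case split will produce a contradiction. I would suppose that the conclusion fails and fix some $x_0 \in \mathbb{R}$ with $\neg \Psi(x_0)$. Downward closure (\ref{down}) then forces $\neg \Psi(y)$ for every $y \ge x_0$, since $\Psi(y)$ together with $x_0 < y$ would yield $\Psi(x_0)$. Setting $T = \{x \in \mathbb{R} : \Psi(x)\}$, we thus have that $T$ is non-empty by hypothesis and bounded above by $x_0$, so by DC the least upper bound $u = \sup(T)$ exists.

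Next I would split into two cases according to whether $\Psi(u)$ holds. If $\Psi(u)$, the openness hypothesis (\ref{open}) supplies some $\varepsilon > 0$ with $\Psi(y)$ for every $y \in (u - \varepsilon, u + \varepsilon)$; in particular $u + \varepsilon/2 \in T$, contradicting the fact that $u$ is an upper bound of $T$. If instead $\neg \Psi(u)$, the contrapositive of progressivity (\ref{progr}) applied at $u$ produces some $y_0 < u$ with $\neg \Psi(y_0)$. Applying the approximation property of the supremum with $\varepsilon := u - y_0 > 0$ yields $z \in T$ satisfying $y_0 < z \le u$; downward closure then gives $\Psi(y_0)$, contradicting the choice of $y_0$.

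Since both subcases are impossible, the assumption that some $x_0$ witnesses $\neg \Psi$ must fail, and we conclude $\forall x \in \mathbb{R}\, \Psi(x)$. The only non-routine step is the invocation of DC to guarantee that $\sup(T)$ exists; I expect this is where the proof's mathematical strength resides, and where the paper's reverse-mathematical analysis in \S\ref{S6} will eventually pin the content. The remaining combinatorics -- pairing openness (which rules out the supremum being attained) with progressivity (which rules out its not being attained) -- is routine once the supremum is in hand, so I anticipate no serious further obstacle beyond bookkeeping the case split cleanly.
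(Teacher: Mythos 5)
Your proof is correct and uses essentially the same strategy as the paper's: obtain $\sup$ of the extension of $\Psi$ via DC, then combine progressivity, downward closure, and openness to derive a contradiction. The only difference is organizational — the paper first establishes $\Psi(u)$ directly via progressivity (the same reasoning you use to refute your second case) and then invokes openness (your first case), whereas you make the case split on whether $\Psi(u)$ holds explicit.
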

\begin{proof} Suppose that $\Psi(x)$ satisfies (\ref{open}), (\ref{down}) and (\ref{progr}), let $U = \{x \in \mathbb{R} : \Psi(x)\}$ and $v \in U$.   Since $\Psi(x)$ satisfies (\ref{down}), $\Psi(w)$ holds for all $w < v$.  Thus $U$ has no lower bound.   Suppose for a contradiction that $U$ had an upper bound.   Then since $U \neq \emptyset$, $\sup(U)=u$ exists by DC.  Since $\forall y(y < u \rightarrow \Psi(y))$, $\Psi(u)$ follows by (\ref{progr}).   But now note that since $U$ is open and $u \in U$, there exists $\varepsilon \in \mathbb{R}$ such that $(u - \varepsilon, u + \varepsilon) \subset U$.    If $0 < \delta < \varepsilon$, then $u + \delta \in U$.  But since $u + \delta >u$, this contradicts $u = \sup(U)$. 
\end{proof}

As we will discuss further in \S\ref{S5}, the foregoing style of argument was repeatedly anticipated in the development of real analysis from at least the late 19th century.\footnote{A number of 20th century rediscoveries are  documented by \citep{Clark2019}.}   One of the latter chapters was the isolation of a principle  which \citet[pp.~69-70]{Lorenzen1971} called \textsl{analytical induction}.  This is simply the assertion that if $\Psi(x)$ satisfies the hypotheses of Proposition \ref{doi} then it also satisfies its conclusion.\footnote{Lorenzen originally observed that this principle can often be used to replace the assumption 
of Dedekind Completeness for \textsl{arbitrary subsets} of $\mathbb{R}$ -- understood in contrast to those defined by a delimited class of predicates -- within a predicative development of analysis.}  Writing $\mathrm{Open}(\Psi)$ to abbreviate (\ref{open}), $\mathrm{Down}(\Psi)$ for (\ref{down}), and $\mathrm{Prog}(\Psi)$ (or \textsl{Progressive}) for (\ref{progr}),  this can be formulated schematically as follows:
\begin{equation}
\tag{$\mathrm{OI}_0(\Psi):$}  \exists x \Psi(x) \ \wedge \mathrm{Open}(\Psi) \wedge \mathrm{Down}(\Psi) \wedge \mathrm{Prog}(\Psi)) \rightarrow \forall x \in \mathbb{R}\hspace{.4ex} \Psi(x)
\end{equation}

The proof just given for this principle is similar in many respects to the continuous sorites argument.    Better conformity to the reasoning of (\ref{csarg}) can be obtained by applying a version of OI$_0(\Psi)$ restricted to the unit interval $[0,1]$  to which \citet{Coquand1992} gave the name \textsl{Open Induction}.  Using the abbreviation 
%\smallskip
\begin{equation*}
\tag{$\mathrm{Prog}^{[0,1]}(\Psi):$}
\label{prog01} 
 \forall x \in [0,1]\big((\forall y \in [0,1])(y < x \rightarrow \Psi(y)) \rightarrow \Psi(x)\big)
\end{equation*}
this takes the form\footnote{Note that $\Psi(0)$ follows from  $\mathrm{Prog}^{[0,1]}(\Psi)$.  Thus if we are only concerned with the behavior of $\Psi(x)$ on this interval,  the conclusion follows by reasoning about an arbitrary $x \in [0,1]$ in the same manner as the preceding proof without the need to additionally assume $\mathrm{Down}(\Psi)$.}  
\begin{equation*}
\tag{$\mathrm{OI}_1(\Psi)$:}
(\mathrm{Open}(\Psi) \ \wedge \ \mathrm{Prog}^{[0,1]}(\Psi)) \rightarrow (\forall x \in [0,1]) \Psi(x)
\end{equation*}

In these principles the assumption that $\Psi(x)$ is progressive plays a role similar to LCC$_{\sup}(\Psi)$.   Thus even better conformity to the continuous sorites can be had by considering a restricted version of the former
%\smallskip
\begin{equation*}
\tag{$\mathrm{LCC}^{[0,1]}_{\sup}(\Psi):$}
\forall X \subseteq [0,1](X \neq \emptyset \ \wedge  \forall x(x \in X \rightarrow \Psi(x))) \rightarrow \Psi(\sup(X)))
\end{equation*}
%\smallskip
to formulate another version of Open Induction as follows:
%\smallskip
\begin{equation*}
\tag{$\mathrm{OI}_2(\Psi)$:}
  (\mathrm{Open}(\Psi) \ \wedge \ \Psi(0) \ \wedge  \mathrm{LCC}^{[0,1]}_{\sup}(\Psi)) \rightarrow (\forall x \in [0,1]) \Psi(x)
\end{equation*}

A disadvantage of this formulation is that it quantifies over  subsets of the reals and as such is a \textsl{third-order} statement.   But Weber \& Colyvan also discuss a form of the LCC in which the variable $X$ is understood to vary not over subsets of $\mathbb{R}$ but rather over Cauchy sequences. 
%CHECK are the sequences monotone?  otherwise, the claim about sup in the next sentence is not correct. 
%RECHECK
We can also thus consider 
\begin{itemize}
\item[$\mathrm{LCC}^{[0,1]}_\mathrm{seq}(\Psi):$]
For any $x\in [0,1]$, if for any sequence $(x_{n})_{n\in \N}$ in $[0, x]$ with $\lim_{n\di \infty}x_{n}=x$,\\ if we have $\Psi(x_{n})$ for all $n\in \N$, then $\Psi(x)$.
\end{itemize}
and the corresponding version of Open Induction
\begin{equation*}
\tag{$\mathrm{OI}_3(\Psi)$:}  (\mathrm{Open}(\Psi) \ \wedge \ \Psi(0) \ \wedge \ \mathrm{LCC}^{[0,1]}_\mathrm{seq}(\Psi))  \rightarrow (\forall x \in [0,1]) \Psi(x).
\end{equation*}

We may now formulate a derivation similar to the continuous sorites argument which consolidates the reasoning of steps (\ref{csarg}.iii)-viii) into a single application of $\mathrm{OI}_{i}$ for $j \in \{1,2,3\}$.  Retaining the basic assumptions (\ref{contconds}), we have:
\begin{itemize}
\item[(\ref{csarg}$'$)]
\begin{enumerate}[i)]
\item Suppose $\mathrm{Open}(\Phi)$, $\Phi(0)$, and $\mathrm{Prog}^{[0,1]}(\Phi)$ (for $j = 1$),  $\mathrm{LCC}^{[0,1]}_{\sup}(\Phi)$ (for $j = 2$) or $\mathrm{LCC}^{[0,1]}_\mathrm{seq}(\Psi)$ (for $j = 3$).
\item Suppose $\mathrm{OI}_j(\Phi)$.
\item By i) and ii), $\forall x \in [0,1]\Phi(x)$ and thus, in particular, $\Phi(1)$.
\end{enumerate}
\end{itemize}
As with (\ref{csarg}), this yields a contradiction with the further assumption $\neg \Phi(1)$.

In comparing the derivations (\ref{csarg}) and (\ref{csarg}$'$), it should be kept in mind that $\mathrm{OI}_{1}(\Psi), \mathrm{OI}_2(\Psi)$ and $\mathrm{OI}_3(\Psi)$ are all \textsl{theorem schemas} of classical real analysis.  Just as mathematical induction is taken as a valid schema for deriving statements about natural numbers, these principles may all be regarded as valid schema for deriving statements about the unit interval.

The validity of mathematical induction is often regarded as \textsl{constitutive} of the structure $\mathbb{N}$.  But it can also be regarded as a consequence of some but not all definitions of the natural numbers.   Similarly, the proof of Open Induction is related to the assumption that the real numbers satisfy Dedekind Completeness which, in turn, plays a central role in the traditional ``construction of the reals'' in terms of Dedekind cuts.  But as we will discuss in \S\ref{S6}, there are other definitions of $\mathbb{R}$ which lack this property.  We will also see that in such settings there is a sense in which each of OI$_0$-OI$_3$ are  \textsl{equivalent} to Dedekind Completeness.   

Presuming that such principles are retained, we can now also see that the assumptions which the continuous sorites brings into conflict are the openness of the extension of $\Phi(x)$ and that $\Phi(x)$ satisfies either Progressiveness or one of the latter two forms of the LCC.\footnote{The role of the assumption $\mathrm{Open}(\Phi)$ is obscured  in Weber \& Colyvan's original presentation of the continuous sorites which relies  on \textsl{both} $\mathrm{LLC}_{\sup}(\Phi)$ and $\mathrm{LLC}_{\sup}(\Phi)$ but does not explicitly require that the extension of $\Phi(x)$ is open.  Note, however, that when the sets $U$ and $V$ are defined at step iii) of the argument (\ref{csarg}) it must be the case that either $U = [0,u)$ and $V = [u,1]$ or  $U = [0,u]$ and $V = (u,1]$ -- i.e.\ exactly one of them is \textsl{half open} which is in fact sufficient for $\mathrm{OI}_1,\mathrm{OI}_2,\mathrm{OI}_3$ to hold.   Weber \& Colyvan later come close to acknowledging this in adopting a condition similar to $\mathrm{Open}(\Phi)$ as part of their definition of vagueness -- i.e if $\Phi(x)$ is the image of a vague predicate and has extension $X \subseteq [0,1]$, then for each $x \in X$, there exists a neighborhood $U_x$ containing $x$ on which the characteristic function of $X$ is constant \citeyearpar[p. 318]{Weber2010}.}  As we will discuss further in \S\ref{S5}, the role of  $\mathrm{Open}(\Phi)$ in the  proof of Open Induction (in any of its forms) may itself seem reminiscent of an intuitively plausible kind of tolerance principle extending the extension of $\Phi(x)$ from $u \in [0,1]$ to an open set $(u - \varepsilon, u + \varepsilon)$ surrounding it.  But in the continuous setting this on its own is not sufficient to infer $\Phi(1)$ from $\Phi(0)$ as a bounded open sets need not contain its least upper bounds.  It is such a transference of $\Phi(x)$ from the points in $X$ to $\sup(X)$ which the various forms of the LCC are intended to effect.  So we must finally attend to their motivation.

%%\smallskip

The expression ``Leibniz Continuity Condition'' is taken from \citet[\S 11.4]{Priest2006b} who in turn attributes a principle similar to $\mathrm{LCC}_{\sup}$ to Leibniz.   Leibniz referred to the principle in question as his \textsl{Law of Continuity} which he first published in 1687.   One of his more succinct formulations was as follows:
\begin{quote}
\begin{spacing}{.9}
{\footnotesize 
If any continuous transition is proposed terminating in a certain limit, then it is possible to form a general reasoning, which covers also the final limit. \\ \hspace*{1ex} \hfill \citep[p. 147]{Leibniz1920a}
}
\end{spacing}
\end{quote}
\vspace{-3ex}
Leibniz intended this principle to express part of his conviction that physical transitions had to be continuous in a manner analogous to geometrical limiting processes. It is in virtue of this interpretation that Leibniz's principle was later sloganized as ``what is true up to the limit is true at the limit'' (see, e.g., \citealp{Cajori1923}).  

It is tempting to read the sequential formulation $\mathrm{LCC}^{[0,1]}_\mathrm{seq}(\Psi)$ of the LCC into such historical sources.  But such principles are not in general true for arbitrary predicates of real numbers.\footnote{For instance suppose that $x_n = \frac{1}{1^3} + \frac{1}{2^3} +  \frac{1}{3^3} + \ldots \frac{1}{n^3}$  and that $\Psi(x)$ is the predicate \textsl{$x$ is rational}.  Then $ (x_n)_{n\in \mathbb{N}}$ is an increasing sequence such that $\Psi(x_n)$ as each partial sum $\sum_{i = 1}^{n} \frac{1}{i^3}$ is rational.   But although  $(x_{n})_{n \in \mathbb{N}}$ is convergent -- and thus $\lim_{n \rightarrow \infty} x_n := v \in \mathbb{R}$  exists -- it can also be shown that  $\neg \Psi(v)$ as $v$ is irrational.  This is a result of \citet{Apery1979}.}   While \citet[p. 316]{Weber2010} flag their awareness of this, they still assert that such a principle is \textsl{constitutive} of how we should understand vague predicates.   And indeed debate about the Law of Continuity continued to play a role in the development of the calculus during the 18th century.  But far from confirming the principle, the discovery that modeling more complex kinds of physical dynamical systems required the recognition of broader classes of discontinuous functions was taken  as a decisive \textsl{refutation}.\footnote{The initial steps are reconstructed by \citet[\S 33]{Truesdell1960} in regard to a debate between d'Alembert and Euler about the equations which can be employed to model the initial state of a plucked string.  About this Truesdell writes ``Euler's refutation of Leibniz's law was \textsl{the greatest advance in scientific methodology} in the entire [18th] century'' (his italics).}  It thus seems that the ideas behind the LCC stand in bad company alongside an outmoded understanding of both and mathematical and concrete continua.  

%%\smallskip

%We will suggest in \S\ref{S7} that the bearing of this on the semantics of vague predicates is illuminated by \textsl{recursive counterexamples} to Open Induction.  But it will first be useful to present a related argument which does not depend on the completeness assumptions which we will see in \S\ref{S6} are latent in the LCC.

%; the latter are latent in the various forms of Open Induction and the Leibniz Continuity Condition, as shown in \S 6.

\section{The Covering Sorites and the Heine-Borel Theorem}\label{S5}

The forms of the sorites we have considered thus far rely on tolerance principles which push the extension of the image $\Phi(x)$ of a vague predicate $\Phi_0(x)$  rightward from $0$ so that it ultimately surpasses some designated endpoint.  The variant we will consider in this section is grounded in the \textsl{prima facie} weaker assumption  that each point $a$ in an empirical or perceptual interval  is surrounded by some region  $U^{a} = \{b \in A : a \prec b \circ c_a \ \& \ b \prec a \circ c_a\}$ of points which are sufficiently ``nearby'' $a$ such that all points closer than the ``tiny variation'' $c_a$ agree with $a$ on $\Phi_0(x)$.   

If this is true of \textsl{every} $a \in A$, it follows that there must be some means of partitioning such a continuum into a not-necessarily disjoint family $\mathcal{D} = \{U_i : i \in I\}$ of such regions with the following properties: 
\begin{example}
\label{cov0}
\begin{enumerate}[i)]
\item Every $a \in A$ is contained in some region $U_i$.
\item The regions are sufficiently ``small''  such that all $b,c \in U_i$, $\Phi_0(b) \leftrightarrow \Phi_0(c)$.
\end{enumerate}
\end{example}

The intuition motivating the existence of such a family $\mathcal{D}$ is similar to that for $\mathrm{Open}(\Phi)$.  Its is supported by the relationship between vagueness, margins of error arising from finite sensitivity, and the use of open sets to model them.\footnote{See, e.g., \citep[\S 8]{Williamson1994}.} The plausibility is bolstered further by observing that a family witnessing (\ref{cov0}i,ii) might consist of infinitely or even uncountably many regions of sufficiently ``fine grain''.

%%\smallskip

The tolerance principle on which the covering sorites is based results from using the framework of \S \ref{S2} to formalize the properties (\ref{cov0}i,ii).  In this case regions $U$ in $A$ go over to open sets  $O = \{z \in \mathbb{R}^+ : \phi(c) - \varepsilon_a < z < \phi(a) + \varepsilon_a\}$ in $\mathbb{R}$ for a suitable $\varepsilon_a = \phi(c_a)$.  We assume that $\phi:A \rightarrow \mathbb{R}^+$ is a bijection between $A$ and $[0,1]$ satisfying the properties of Theorem \ref{holderthm0}.  In this case the conditions (\ref{cov0}) go over to the claim that there exists a family of intervals $\mathcal{C} = \{O_i : i \in I\}$ satisfying 
\begin{example}
\label{cov}
\begin{enumerate}[i)]
\item $\forall x \in [0,1] \exists i(x \in O_i)$ 
\item $\forall i \in I \forall x,y \in O_i(\Psi(x) \leftrightarrow \Psi(y))$
\end{enumerate}
\end{example}
We will write $\exists \mathcal{C}\mathrm{Cov}(\Psi, \mathcal{C})$ to abbreviate that a cover satisfying (\ref{cov}i,ii) exists.  

%%\smallskip

The covering sorites seeks to derive a contradiction from the same soritical premises (\ref{contconds}) as the continuous sorites by relying on the compactness of the Euclidean topology on $[0,1]$ rather than Dedekind Completeness or one its equivalents.   The former is reported by what is now called the \textsl{Heine-Borel Theorem}:
\begin{theorem}
\label{hb}
Suppose that $\mathcal{C} = \{O_i : i \in I\}$ is an open covering of $[0,1]$ $($or in general any closed and bounded set $X$ of real numbers$)$.   Then there are $i_{0}, \dots, i_{k}\in I$ such that for any $x\in [0,1]$, there is $j\leq k$ with $x\in O_{i_{j}}$, i.e.\ $\{O_{i_{j}}: j\leq k \}$ is a finite sub-covering of $[0,1]$ $($or $X)$. 

%a family of sets $\mathcal{C}_0 \subseteq \mathcal{C}$ with $\mathcal{C}_0 = \{O_j : j \in J\}$ for a finite set $J \subseteq I$ such that for all $x \in [0,1]$, there exists some $j \in J$ such that $x \in O_j$.
%NEW a =>  some
\end{theorem}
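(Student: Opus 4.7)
The plan is to reduce the general case to the unit interval and then apply the Open Induction schema $\mathrm{OI}_{1}$, which is already in hand, to a natural predicate expressing ``having a finite subcover down to here.'' For an arbitrary closed and bounded $X\subseteq [a,b]$, the complement $\R\setminus X$ is open; adjoining it to $\mathcal{C}$ yields an open cover of $[a,b]$, and after affinely rescaling $[a,b]$ to $[0,1]$ any finite subcover restricts to a finite subcover of $X$ by discarding $\R\setminus X$. So it suffices to handle the case where $\mathcal{C}$ covers $[0,1]$.

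For that case I would define
\[
\Psi(x)\;:=\;\exists k\in\N\,\exists i_{0},\ldots,i_{k}\in I\,\big([0,x]\subseteq O_{i_{0}}\cup\cdots\cup O_{i_{k}}\big),
\]
and verify the hypotheses of $\mathrm{OI}_{1}(\Psi)$. For $\mathrm{Open}(\Psi)$, if $\Psi(x)$ is witnessed by a finite subcover $F$, then $x$ lies in some $O_{i}\in F$, which contains a neighborhood $(x-\varepsilon,x+\varepsilon)$, so the same $F$ witnesses $\Psi(y)$ for every $y$ in that neighborhood (segments $[x,y]$ with $y<x+\varepsilon$ are absorbed by $O_{i}$). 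For $\mathrm{Prog}^{[0,1]}(\Psi)$, suppose $\Psi(y)$ holds for every $y<x$ in $[0,1]$, pick $i^{*}$ with $x\in O_{i^{*}}$ together with $\delta>0$ such that $(x-\delta,x+\delta)\cap[0,1]\subseteq O_{i^{*}}$, then choose any $y\in(x-\delta,x)$ (handling $x=0$ trivially by $\{0\}\subseteq O_{i^{*}}$), take a finite subcover $F$ of $[0,y]$ from the inductive hypothesis, and observe that $F\cup\{O_{i^{*}}\}$ covers $[0,x]$ because $[y,x]\subseteq O_{i^{*}}$. Then $\mathrm{OI}_{1}$ delivers $\Psi(x)$ for every $x\in[0,1]$, and in particular $\Psi(1)$, which is the desired finite subcover.

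An equivalent classical route uses Dedekind Completeness directly: let $E=\{x\in[0,1]:\Psi(x)\}$, observe $0\in E$ and that $E$ is bounded by $1$, let $s=\sup E$, and use an open $O_{i^{*}}\ni s$ with neighborhood $(s-\varepsilon,s+\varepsilon)\subset O_{i^{*}}$ together with a witness $y\in E$ satisfying $s-\varepsilon<y\leq s$ to extend a finite cover of $[0,y]$ slightly past $s$, contradicting $s<1$. The main delicacy, shared by both routes, is the progressiveness step: one must pick $y<x$ strictly inside the $O_{i^{*}}$-neighborhood so that the whole segment $[y,x]$ (and not merely its endpoints) is absorbed by $O_{i^{*}}$, allowing a finite cover of $[0,y]$ to be genuinely extended to one of $[0,x]$. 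Nothing else in the argument is substantive, and the reliance on completeness-style existence of suprema (or equivalently on $\mathrm{OI}_{1}$) anticipates the reverse-mathematical analysis of \S\ref{S6}, which will eventually locate Heine--Borel strictly below the $\mathsf{ACA}_{0}$-level of Dedekind Completeness used here.
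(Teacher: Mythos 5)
Your proof is correct and is essentially the paper's proof (attributed to Lebesgue) in slightly different packaging: the paper takes $\inf$ of the set $W$ of ``unreached'' points and derives a contradiction, while you take $\sup$ of the complementary set $E$ of ``reached'' points, or equivalently invoke $\mathrm{OI}_1$, and both routes run through Dedekind Completeness as you note. The only substantive addition on your side is the explicit reduction of the general closed-bounded case to $[0,1]$ by adjoining $\R\setminus X$ to the cover, which the paper leaves implicit.
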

We will return to the proof of Theorem \ref{hb} in a moment.   But to see how it can be used to formulate a soritical derivation, we record the following consequence:
\begin{proposition}
\label{prop2}
Suppose that there exists an open $\Psi$-covering of $[0,1]$ and $\Psi(0)$.  Then $\forall x \in [0,1] \Psi(x)$.    
\end{proposition}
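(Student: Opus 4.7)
The plan is to apply Theorem \ref{hb} to the open $\Psi$-covering $\mathcal{C}$ in order to extract a finite subcover $\{O_{i_0}, \ldots, O_{i_k}\}$ of $[0,1]$. By condition (\ref{cov}ii) the predicate $\Psi$ takes a constant truth value on each $O_{i_j}$, so I would partition the indices $\{0,1,\ldots,k\}$ into $S = \{j : \Psi \text{ holds throughout } O_{i_j}\}$ and $F = \{j : \neg\Psi \text{ holds throughout } O_{i_j}\}$, and set $U = \bigcup_{j \in S} O_{i_j}$ and $V = \bigcup_{j \in F} O_{i_j}$.

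Next I would observe that $U$ and $V$ are open subsets of $\R$ whose union contains $[0,1]$ and whose intersection is empty: any common point $x$ would lie in some $O_{i_j}$ with $j \in S$ and some $O_{i_\ell}$ with $\ell \in F$, yielding both $\Psi(x)$ and $\neg \Psi(x)$ via (\ref{cov}ii). The hypothesis $\Psi(0)$ forces some $O_{i_j}$ containing $0$ into $S$, so $U$ is nonempty. Invoking the connectedness of $[0,1]$ in the Euclidean topology, the disjoint relatively open sets $U \cap [0,1]$ and $V \cap [0,1]$ covering $[0,1]$ with $U \cap [0,1]$ nonempty force $V \cap [0,1] = \emptyset$. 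Hence every $x \in [0,1]$ lies in some $O_{i_j}$ with $j \in S$, and (\ref{cov}ii) yields $\Psi(x)$.

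The main obstacle is not the logical structure of the argument -- which is just a finite partition glued together by connectedness -- but rather ensuring that the appeal to Heine-Borel is doing essential work rather than being cosmetic. Classically the same partition-and-connectedness reasoning applies already to the original cover $\mathcal{C}$, so the significance of passing through a finite subcover is only fully felt in the reverse-mathematical analysis of \S \ref{S6}, where neither the partition nor the requisite form of connectedness of $[0,1]$ is available without a compactness-style principle. An alternative presentation sidesteps the appeal to connectedness by chaining through the finite subcover explicitly: start from the $O_{i_j}$ containing $0$, iteratively adjoin any $O_{i_\ell}$ meeting a set already marked, and argue by the finiteness of $\{0,\ldots,k\}$ together with the covering property that the procedure exhausts the subcover -- a reformulation which makes the role of the finite subcover especially transparent.
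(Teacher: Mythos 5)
Your proof is correct as a classical argument. The paper treats Proposition~\ref{prop2} as immediate from Theorem~\ref{hb} and gives no separate proof at this point; the reasoning is made explicit only later, inside the proof of Theorem~\ref{klop2} part~(c), which uses the chaining variant you sketch as an alternative: begin with the member of the finite subcover containing~$0$ (on which $\Psi$ holds throughout by (\ref{cov}ii) together with $\Psi(0)$), repeatedly adjoin a member meeting the union marked so far (propagating $\Psi$ via (\ref{cov}ii)), and use $\Sigma^0_1$-induction on the size of the subcover to conclude the process exhausts it. Your primary route --- sorting the finite subcover into $\Psi$-blocks $S$ and $\neg\Psi$-blocks $F$, forming $U$ and $V$, and invoking connectedness of $[0,1]$ --- is a genuinely different but equally valid classical proof. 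Your remark that the Heine--Borel step is classically cosmetic is apt and is precisely the issue the paper takes up in footnote~\ref{topnote2}: the reverse-mathematical strength of the connectedness of $[0,1]$ is sensitive to formalization (it can land at $\mathsf{RCA}_0$ or $\mathsf{ACA}_0$ depending on how complements are coded), whereas routing through Heine--Borel pins the covering sorites cleanly at the $\mathsf{WKL}_0$ level. One further difference worth noting: your connectedness route quietly invokes excluded middle to classify each $O_{i_j}$ as lying in $S$ or in $F$, while the chaining route only ever propagates $\Psi$ forward and never needs to decide $\neg\Psi$ on any block; this is harmless for the classical Proposition as stated, but it is part of why the chaining formulation is the one that transfers smoothly to the weak base theory used in \S\ref{S6}.
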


While this is immediate from Theorem \ref{hb}, it is still useful to isolate Proposition~\ref{prop2} as a corollary as it bears a similar relation to the Heine-Borel Theorem as does Proposition \ref{doi} (and thus Open Induction)  to Dedekind Completeness.  In particular, we may use it to formulate a schema encapsulating how the compactness of the unit interval may be used to uniformly mediate a general inference:
\begin{equation*}
\tag{$\mathrm{CL}(\Psi):$}
(\exists \mathcal{C}\mathrm{Cov}(\Psi,\mathcal{C} ) \ \wedge \ \Psi(0)) \rightarrow \forall x \in [0,1] \Psi(x)
\end{equation*}
We will call this principle the \textsl{Creeping Lemma}.\footnote{The name was suggested for a similar principle by \citet{Moss1968a}. The Creeping Lemma is also cited by \citet{Clark2019} as one of several anticipations of Open Induction.  However we will see in \S 6 that it is a strictly weaker principle from a logical point of view.}

%%\smallskip

The covering sorites for a vague predicate $\Phi(x)$ is now as follows:\footnote{We have been unable to locate any  anticipations of this argument.  The closest approximations of which we are aware are the \textsl{topological sorites} of \citet{Weber2010} and the \textsl{classificatory sorites} of \citet{Dzhafarov2019}. But see notes \ref{topnote} and \ref{topnote2}.} 
\begin{example}
\label{comparg}
\begin{enumerate}[i)]
\item Suppose $\exists \mathcal{C}\mathrm{Cov}(\mathcal{C},\Phi)$ and $\Phi(0)$.  
\item Suppose $\mathrm{CL}(\Phi)$.  
\item By i) and ii), $\forall x \in [0,1]\Phi(x)$ and thus, in particular, $\Phi(1)$.
\end{enumerate}
\end{example}
Again as in the case of derivation (\ref{csarg}), this reasoning yields a contradiction with the further soritical assumption $\neg \Phi(1)$.  But the Creeping Lemma is again a theorem schema of classical analysis.     As with the LCC, whether one regards the above as a genuine paradox will thus likely covary with the stock one places in $\exists \mathcal{C}\mathrm{Cov}(\mathcal{C},\Phi)$ for a given predicate.  But there is still a connection between the continuous sorites and the manner in which the  Heine-Borel Theorem was originally obtained.

The Heine-Borel Theorem can be credited directly to \citet{Borel1895} in the case of countable coverings.  Both his original proof as well as several others which followed exemplify two salient features.\footnote{See \citep{Dugac1989} on the history and early proofs of the Heine-Borel Theorem.} First, assuming that $\mathcal{C}$ is a covering of $[0,1]$, they employ sorites-like arguments to show that it is possible to derive a contradiction from the assumption that there is a point in $x \in [0,1]$ which cannot be reached via a finite sequence of overlapping intervals from $\mathcal{C}$ the first of which contains $0$.  Second, in order to do so, they employ principles equivalent to Dedekind Completeness.

Both points are  illustrated by Lebesgue's  \citeyearpar[pp. 104-105]{Lebesgue1904} proof of Theorem~\ref{hb}:
\begin{proof}
Let $\mathcal{C} = \{O_i : i \in I\}$ be an open covering of $[0,1]$ and suppose for reductio that there does not exist a finite sub-cover of $\mathcal{C}$.  Say a point is \textsl{reached} if it satisfies
\begin{equation*}
\text{$\Psi(x) = $ \textsl{the interval $[0,x]$ can be covered by a finite number of intervals from $\mathcal{C}$}}
\end{equation*}

%\smallskip

\noindent Our reductio assumption implies that the set $W = \{x \in [0,1] : \neg \Psi(x)\} \neq \emptyset$.   But since $\mathcal{C}$ covers $[0,1]$ it follows that $0 \not\in W$ since $0$ is reached as witnessed by some $O_{i_0} = (x_{i_0},y_{i_0})$. As $W \neq \emptyset$ and is bounded below,  $\inf(W) = w$ exists by DC. But then $w \in O_k = (x_k,y_k)$ for some $k \in I$.  By the density of the reals there also exist $u,v$ such that $x_k < u < w < v < y_k$.  By the definition of $W$, $u$ is reached.  There thus exists a finite sub-cover $\mathcal{C}^-_0 \subseteq \mathcal{C}$ of $[0,u]$.   But then $\mathcal{C}_0 = \mathcal{C}^-_0 \cup \{(x_k,y_k)\}$ is a finite covering of $[0,v]$, contradicting the definition of $w$ as $\inf(W)$.     
\end{proof}
The method of this proof is similar to that of Open Induction which is in turn similar to the continuous sorites argument itself. The  development of the topology of reals in the 19th century thus anticipated the covering sorites in much the same way that reflection on limits in the 18th century anticipates the continuous sorites.  But since the proof of Theorem \ref{hb} just given also relies on Dedekind Completeness, one might conclude that the covering and continuous sorites share a common mathematical core.   As will see, however, argument (\ref{comparg}) requires \textsl{strictly weaker} assumptions than does (\ref{csarg}). It thus it seems reasonable to regard the continuous and covering sorites as genuinely \textsl{distinct} forms of the paradox.

\section{Reverse Mathematics of the sorites}\label{S6}

In this section we use the methods of reverse mathematics to show that the central assumption underlying the continuous sorites is equivalent to a non-constructive principle known as \textsl{Arithmetical Comprehension} while that underlying the covering sorites is equivalent to a strictly weaker (but still non-constructive)  principle known as \textsl{Weak K\"onig's Lemma}.  This will inform our discussion in \S \ref{S7} .  

\subsection{Subsystems of second-order arithmetic}
\label{S6.1}

Reverse mathematics is a subfield of mathematical logic devoted to determining the set existence principles necessary to prove theorems of ordinary, non-set theoretic mathematics, inclusive of real analysis. This is done by formalizing the theorems in question in the \textsl{language of second-order arithmetic} $L_2$, and then proving their equivalences to axiomatic systems located in a canonical hierarchy of set existence principles. The proofs are carried out in a weak base theory known as $\mathsf{RCA}_0$, which can be understood as a codification of computable mathematics.  The canonical presentation is the monograph \citep{Simpson2009} whose conventions we will briefly review.\footnote{See \citep{Eastaugh2024} for an introduction targeted at philosophers and \cite{Dzhafarov2022} for a recent textbook.}

The language $L_2$ extends the language of first-order arithmetic -- which has variables $n,m, \ldots$ intended to range over natural numbers along with non-logical symbols $0,+,\times,<$ -- with second-order variables $X,Y,\ldots$ intended to range over \textsl{sets} of natural numbers.   The axiomatic system $\mathsf{Z}_2$ of \textsl{full second-order arithmetic} has following components: 1) the axioms of $\mathsf{PA}^-$ (i.e.\ a slight strengthening of Peano arithmetic without induction); 2) the second-order induction axiom $\forall X((0 \in X \ \wedge \ \forall n(n \in X \rightarrow n+1 \in X)) \rightarrow \forall n(n \in X))$; 3) the full second-order comprehension schema $\exists X \forall n(n \in X \leftrightarrow \varphi(n))$ where $\varphi(n)$ is a $L_2$-formula not containing $X$ free.  The \textsl{standard model} of $L_2$ is $\mathcal{N} = \langle \mathbb{N}, \mathcal{P}(\mathbb{N}), 0,+,\times,< \rangle$ in which the first- and second-order variables respectively range over $\mathbb{N}$ and the power set of $\mathbb{N}$.

Despite its apparent limitations, many other sorts of objects can be represented (or \textit{coded}) in $L_2$.    For instance, a positive \emph{rational number} $q \in \mathbb{Q}^{+}$ can be coded by a pair $(m, n)$ of natural numbers which $q=\frac{m}{n}, n \neq 0$ and $m,n$ are relatively prime using a definable pairing function.   Elements of $\mathbb{Q}$ can then be coded by triples to keep track of signs.  In turn, a \emph{real number} $x$ can be coded by a sequence of rationals $q_0, q_1, q_2, \ldots$ with an exponential rate of convergence 
\begin{example}
\label{mod2k}
$\forall k \forall i(|q_k - q_{k+1}| \leq 2^{-k})$.
\end{example}
Since rational numbers are coded by individual natural numbers, in this way a given $x \in \mathbb{R}$ corresponds to a set $X \subseteq \mathbb{N}$ as in the traditional arithmetization of analysis.  Similarly, a sequence of reals $x_0, x_1, x_2, \ldots$ can then be represented as a doubly indexed sequence of rationals, which in turn may be understood as a function (which may itself be coded as a set) of type $F:\mathbb{N} \times \mathbb{N} \rightarrow \mathbb{Q}$ such that $x_j$ is coded by $q_{0,j},q_{1,j}, q_{2,j},\ldots$ with $q_{i,j} := F(i,j)$.  Finally an \textsl{open set} $O$ of real numbers can be coded by two sets of rationals $\{q_{0}, q_{1}, q_{2}, \dots\}$ and $\{q'_{0}, q'_{1}, q'_{2}, \dots\}$ where we view $O$ as represented by the union $\bigcup_{n\in \N}(q_{n}, q'_{n})  \subseteq \mathbb{R}$.\footnote{This is a special case of open sets in a separable metric spaces given by \citet[II.5.6]{Simpson2009}.}

Using such representations most theorems of classical real analysis can be proven in $\mathsf{Z}_2$.  But a motivation for the development of reverse mathematics was the discovery that many such theorems can be proven in \textsl{subsystems} such as the following:

\smallskip
\smallskip
\noindent $\bullet$ $\mathsf{RCA}_0$ consists of $\mathsf{PA}^-$ and the following restrictions to $\mathrm{Ind}_2$ and $L_2$-CA: 
\begin{itemize}
\item[- $\mathrm{Ind}$-$\Sigma^0_1$:]
Suppose $\varphi(n)\equiv (\exists m)\psi(n,m)$ for $\psi(n,m)$ contains only bounded quantifiers.  Then $[\varphi(0)\wedge (\forall n)(\varphi(n) \rightarrow \varphi(n+1))] \rightarrow (\forall m)\varphi(m)$.
\end{itemize}

\begin{itemize}
\item[- $\Delta^0_1$-$\mathrm{CA}$:]
Suppose $\varphi(n, k)$ and $\psi(n, m)$ contain only bounded quantifiers and \\ $(\forall n)[(\exists m)\varphi(n, m) \leftrightarrow (\forall k)\psi(n,k) ]$. Then $\exists X (\forall n)( n\in X\leftrightarrow (\forall k)\psi(n,k))$.
\end{itemize}
\smallskip
\smallskip
\noindent  $\bullet$  $\mathsf{WKL}_0$ consists of $\mathsf{RCA}_0$ with the $L_2$-formulation of \textsl{Weak K\"onig's Lemma}: 
\begin{itemize}
\item[- WKL:] Every infinite binary tree $T \subseteq 2^{\mathbb{N}}$ has a path.  
\end{itemize}

\smallskip
\smallskip

\noindent  $\bullet$  $\mathsf{ACA}_0$ consists of $\mathsf{RCA}_0$ with the \textsl{arithmetical comprehension schema}:

\begin{itemize}
\item[- ACA:] Suppose $\varphi(n,k)$ is a $L_2$-formula with number quantifiers and second-order parameters, but no set quantifiers.  Then $\exists X (\forall n)( n\in X\leftrightarrow (\forall k)\varphi(n,k))$
\end{itemize}

Recall that a set of natural numbers is \textsl{computable} just in case it is definable by both a $\Sigma^0_1$- and a $\Pi^0_1$-formula as required by the antecedent of $\Delta^0_1$-$\mathrm{CA}$.  It follows that $\mathsf{RCA}_0$ has as its minimal $\omega$-model the structure $\textsc{Rec}$ whose second-order domain is comprised of the computable sets.  In light of its minimal commitments, $\mathsf{RCA}_0$ is conventionally adopted as the \textsl{base theory} for reverse mathematics.  By contrast, a computability theoretic argument shows that there exist infinite recursive binary trees without recursive paths.  It thus follows that every $\omega$-model of $\mathsf{WKL}_0$ must contain non-computable sets.   The minimal $\omega$-model of $\mathsf{ACA}_0$ is the structure $\textsc{Arith}$ whose second-order domain is comprised of the arithmetically-definable sets -- i.e.\ those definable in the manner required by the antecedent of ACA.  This includes many non-computable sets -- e.g.\ Turing's \textsl{Halting problem} $K$.

It is possible to show in this manner that these three systems increase in strength. Indeed they form the first steps in sequence of canonical theories $\mathsf{RCA}_0 \subsetneq \mathsf{WKL}_0 \subsetneq \mathsf{ACA}_0 \subsetneq \mathsf{ATR}_0 \subsetneq  \Pi^1_1\text{-}\mathsf{CA}_0 \subsetneq \mathsf{Z}_2$ studied in the conventional development of reverse mathematics.  This framework has been used to classify hundreds of theorems of ordinary mathematics in the sense of showing that they are either provable in $\mathsf{RCA}_0$ or are equivalent over this theory to one of the other systems in this sequence.   It thus should not be surprising that the principles underlying the forms of the sorites paradox considered above fall within the scope of reverse mathematics. 

\subsection{Classificatory theorems}\label{penk2}

The mathematical results presented in this section illustrate two points on which we will build in \S\ref{S7}.    First, the three sorites arguments considered above  -- i.e.\ \textsl{discrete} (\S\ref{S2}), and \textsl{covering} (\S\ref{S5}), \textsl{continuous} (\S\ref{S3}) -- rest on increasing non-constructive principles. Second, this provides a principled reason for regarding the three arguments as distinct forms of the paradox.  

In regard to the first point, the formalization of mathematics in $\mathsf{RCA}_0$ has traditionally been regarded as compatible with the development of \textsl{constructive analysis} \citep[\S I.8]{Simpson2009}.  For instance within $\mathsf{RCA}_0$ it is possible to formalize proofs describing effective approximations of real numbers such as the non-enumerability of the reals.  Another example is the proof of original statement of H\"older's Theorem -- i.e.\ \textsl{every order Archimedean group is isomorphic to a subgroup of} $\langle \mathbb{R},<,+ \rangle$.  

\citet{Solomon1998a} showed that this statement is provable in $\mathsf{RCA}_0$.  By adapting his construction  
to accommodate the weaker axioms for $\mathcal{A}$ we can obtain the following:
\begin{proposition}  
\label{holderrca0}
Using the coding of the reals described above, the $L_2$-formalization of Theorem \ref{holderthm0} of \S\ref{S2} is provable in $\mathsf{RCA}_0$.
\end{proposition}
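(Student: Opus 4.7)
The plan is to adapt Solomon's $\mathsf{RCA}_0$-provable proof of H\"older's theorem for ordered Archimedean groups to the present semigroup setting. The key idea is to bypass any appeal to a supremum and instead directly construct a Cauchy sequence of rational approximations to $\phi(a)$ with a computable modulus of convergence, so that the real $\phi(a)$ emerges as a legitimate code in the sense of (\ref{mod2k}). Because the relations $\preceq$ and the iterated-concatenation notation $c^{m}$ are all recognizable by bounded formulas in the coded structure $\mathcal{A}$, every auxiliary set we form will be $\Delta^{0}_{1}$-definable.

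Concretely, fix $c \in A$ and first take $r = 1$; the general case is obtained by pointwise multiplication by $r$. For each $a \in A$ and $n \geq 1$, define $M(a,n)$ to be the largest $m \in \mathbb{N}$ with $c^{m} \preceq a^{n}$ (and $M(a,n) = 0$ if no $m \geq 1$ satisfies this). An explicit upper bound on $M(a,n)$ comes from the Archimedean property: if $c \prec a$ there is $N$ with $a \preceq c^{N}$ and hence $M(a,n) \leq Nn$, while if $a \preceq c$ then $M(a,n) \leq n$ by strict monotonicity of $(c^{m})_{m}$. Bounded minimization then furnishes $M(a,n)$ inside $\mathsf{RCA}_{0}$. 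Set $\phi_{k}(a) := M(a, 2^{k})/2^{k}$ and declare $\phi(a)$ to be the real coded by $(\phi_{k}(a))_{k \in \mathbb{N}}$.

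The required verifications run as follows. (i) Monotonicity of $\circ$ yields the near-additivity $M(a, 2n) \in \{2M(a,n), 2M(a,n)+1\}$, so $0 \leq \phi_{k+1}(a) - \phi_{k}(a) \leq 2^{-(k+1)}$, supplying the rate of convergence. (ii) $M(c, n) = n$ is immediate, giving $\phi(c) = 1$. (iii) If $a \prec b$, regularity gives $z$ with $a \circ z \preceq b$; the Archimedean property applied to $z$ and $c$ produces $n_{0}$ with $c \preceq z^{n_{0}}$, and a short calculation using the monotonicity axioms then forces $M(b, n) \geq M(a, n) + 1$ for all sufficiently large $n$, whence $\phi(a) < \phi(b)$. (iv) The inequalities $M(a,n) + M(b,n) \leq M(a \circ b, n) \leq M(a,n) + M(b,n) + 1$, again derivable from monotonicity, yield $\phi(a \circ b) = \phi(a) + \phi(b)$ in the limit. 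Uniqueness follows by noting that any $\psi$ with the stated properties satisfies $\psi(c^{n}) = n\psi(c)$, and so any $a$ is pinned down by the squeeze $M(a,n)/n \leq \psi(a)/\psi(c) \leq (M(a,n)+1)/n$.

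The main obstacle is to ensure that every step stays inside $\mathsf{RCA}_{0}$. In particular, one must resist the temptation to introduce $\phi(a)$ as $\sup \{ m/n : c^{m} \preceq a^{n} \}$, since the existence of such suprema will be shown equivalent to $\mathsf{ACA}_{0}$ later in \S\ref{S6}. The route around this, following Solomon, is to derive every quantitative claim about $\phi$ from a corresponding finite combinatorial claim about the integers $M(a,n)$, which only involves the $\Delta^{0}_{1}$-predicate $c^{m} \preceq a^{n}$. A minor complication is the absence of an identity element: when $a$ is so small that $c \npreceq a^{2^{k}}$ for initial values of $k$ we set $\phi_{k}(a) = 0$, which has no effect on Cauchy convergence since the Archimedean property secures some threshold $k_{0}(a)$ beyond which $M(a, 2^{k}) \geq 1$, and a brief calculation shows the gap introduced at $k_{0}(a)$ still respects the required modulus.
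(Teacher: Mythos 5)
The paper does not present a detailed proof of this proposition: it cites Solomon (1998) for the group case and defers to Krantz et al.\ (1971) for the semigroup adaptation, sketching only in a footnote the classical \emph{subunit} construction -- one first produces a decreasing chain $u_0 \succ u_1 \succ \cdots$ with $u_{k+1}\circ u_{k+1}\preceq u_k$ and then defines $\phi(a)$ as the limit of $N(u_k,a)/N(u_k,u_0)$, where $N(b,a)$ counts how many $\circ$-copies of $b$ fit below $a$. Your construction is genuinely different: you fix the unit $c$ once and for all and instead raise the \emph{object} $a$ to large powers, taking $\phi_k(a)=M(a,2^k)/2^k$ -- the Eudoxean route rather than the bisection route. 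Both are classical, both achieve the exponential modulus \(\ref{mod2k}\) required by the $L_2$-coding, and both keep the auxiliary combinatorics at the $\Delta^0_1$ level over the coded structure, so both are in principle viable in $\mathsf{RCA}_0$.

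There is, however, a concrete gap in your verifications (iii) and (iv), and it is precisely the point that the subunit construction is designed to avoid. You assert that $M(a,n)+M(b,n)\leq M(a\circ b,n)$ and (in the monotonicity case) $M(b,n)\geq M(a,n)+1$ are ``derivable from monotonicity,'' but unwinding them requires comparing a product in one order with a product in another -- for instance $a^n\circ b^n$ against $(a\circ b)^n$, or $a^n\circ z^{n_0}$ against $(a\circ z)^n$. The monotonicity axioms of Theorem \ref{holderthm0} (namely $x\preceq y\Rightarrow z\circ x\preceq z\circ y$ and $x\circ z\preceq y\circ z$) allow you to replace a \emph{factor} by a $\preceq$-comparable one; they do not allow you to \emph{reorder} factors, so neither inequality follows as stated. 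Commutativity is not among the listed axioms for an ordered positive regular Archimedean semigroup; it is indeed a classical consequence of them (a semigroup analogue of H\"older's lemma), but that derivation is an essential lemma that your sketch uses silently in both (iii) and (iv), and it should be stated and carried out (a first-order argument, so no obstacle for $\mathsf{RCA}_0$, but it must appear). The subunit approach of the paper's footnote never faces this issue: there one only ever concatenates copies of the \emph{single} element $u_k$ with itself (so that $u_k^{N(u_k,a)}\circ u_k^{N(u_k,b)}\preceq a\circ b$ and its converse come directly from the two one-sided monotonicities), with no reordering required. Either derive commutativity as a preliminary lemma, or replace your power construction with the subunit construction; with that repair your argument is sound and is a legitimate alternative to the route the paper gestures at.
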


This result is significant for two reasons.  First, it exemplifies a presupposition of measurement theory that representation theorems ought to be accompanied by constructive procedures for associating magnitudes with real numbers.  Indeed the proof of H\"older's Theorem given by \citet{Krantz1968} describes a method for computing the measurement function $\phi: A \to \mathbb{R}$ via a process of successive bisecting approximations so that the value of $\phi(a)$ is given as sequences of rationals $q_0, q_1, \ldots$ with an effective modulus of convergence.\footnote{One first defines $N(b,a)$ to be the least $n \in \mathbb{Z}$ such that the $\circ$-concatenation of $n+1$ ``copies'' of $b$ exceed $a$ in the sense of $\prec$ (which exists by the Archimedean axiom).  Choosing a unit $u_0 \in A$, the other axioms entail that if $A$ is infinite there exist subunits $\langle u_k \rangle$ such that $u_{k+1} \circ u_{k+1} \preceq u_k$.  The value of $\phi(a)$ is then given by the series defined via $q_k := \frac{N(u_k,a)}{N(u_k,u)}$ which can be shown to converge with the same exponential modulus (\ref{mod2k}) used in the $L_2$-definition of reals.   This idea is implicit in H\"older's original presentation \citeyearpar{Holder1901a} and also employed by \citet{Solomon1998a}.   The proof is given in most detail by \citep[\S 2.2]{Krantz1971} who also present it as a paradigm of extensive measurement which may be adapted to prove other representation theorems.}   Second, it illustrates that while the linguistic formulation of discrete sorites arguments are often reliant on such representations, they do not require additional non-constructive principles beyond $\mathsf{RCA}_0$. 

Turning next to the continuous sorties, we begin by observing that it is possible to formulate a version of Open Induction as a single statement rather than a schema by quantifying over open sets directly rather than formulas which define them:
\begin{itemize}
\item[$\mathrm{OI}_4$:] For any open $O \subseteq [0,1]$, we have \\ $\forall y \in [0,1]((\forall z\in [0,1])(z<y\di z\in  O)\di y\in O) \rightarrow (\forall x\in [0,1])(x\in O))$
%For all open sets $O\subset [0,1]$ and $y\in [0,1]$ we have $(\forall z\in [0,1])(z<y\di z\in  O)\di y\in O$, then $(\forall x\in [0,1])(x\in O)$
%\item[\covexnumber{\OI$_3$}]  Any open set $O\subset [0,1]$ satisfies $(\forall x\in [0,1])(x\in O)$ if for all $y\in [0,1]$, we have $(\forall z\in [0,1])(z<y\di z\in  O)\di y\in O$.
\end{itemize}
The relation of this principle to the continuous sorites is clarified by the following.\footnote{Note that we do not include an $L_2$-formalization of $\mathrm{OI}_2$ as its antecedent contains the term $\sup(X)$ .   The equivalence of (\ref{mct})-(\ref{cauchoi}) and (\ref{ac}) makes clear that the existence of the suprema of sets of real numbers with relatively simple definitions is already sufficient to entail $\mathrm{ACA}$ over $\mathsf{RCA}_0$.}

\begin{theorem}\label{klop} The following statements are provably equivalent over $\mathsf{RCA}_0:$
\begin{enumerate}
\renewcommand{\theenumi}{\alph{enumi}}
\item The schema of Arithmetical Comprehension $\mathrm{ACA}$. \label{ac}
\item A bounded monotone sequence $(x_{n})_{n\in \N}$ of reals in $[0,1]$ converges to a limit.\label{mct}
\item If $(x_n)_{n \in \mathbb{N}}$ is a Cauchy sequence of reals, then $\lim_{n \rightarrow \infty} x_n$ exists. \label{cauch}
\item If $(x_{n})_{n\in \N}$ is a bounded sequence of reals, then $\sup(\{x_n : n \in \mathbb{N}\})$ exits.   \label{sdc}
\item If $C$ is a \textup{(}code for\textup{)} a closed subset of $[0,1]$, the infimum $\inf(C)$ exists.\label{inff}
\item If $O$ is a \textup{(}code for\textup{)} an open subset of $[0,1]$, the supremum $\sup(O)$ exists.\label{supp}
\item Open Induction in the form $\mathrm{OI}_{1}(\Psi)$ for $\Sigma_{1}^{0}$-formulas $\Psi(x)$. \label{cauchoi}
\item Open Induction in the form $\mathrm{OI}_{3}(\Psi)$ for $\Sigma_{1}^{0}$-formulas $\Psi(x)$. \label{tenk}
\item Open Induction in the form $\mathrm{OI}_{4}(\Psi)$.\label{plop}
\end{enumerate}
\end{theorem}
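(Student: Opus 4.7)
The plan is to establish the equivalences in three blocks.  The first block, (a)--(f), concerns purely ``arithmetical'' reformulations of Dedekind Completeness and is largely available in the literature, so I would adapt it from \citet{Simpson2009} (II.3 and III.2).  The genuinely new content is the block of equivalences tying open induction (g)--(i) to $\mathrm{ACA}$.  My overall strategy is to verify $(a) \Leftrightarrow (b) \Leftrightarrow (c) \Leftrightarrow (d) \Leftrightarrow (e) \Leftrightarrow (f)$ by the standard chain of implications, then prove $(a) \Rightarrow (g),(h),(i)$ by formalizing Proposition \ref{doi}, and finally close the web with a reverse implication from each of (g), (h), (i) back into the block (a)--(f).

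For $(a) \Rightarrow (g), (h), (i)$, I would observe that the proof of Proposition \ref{doi} uses Dedekind Completeness only once, to produce $\sup(U)$ for $U = \{x \in [0,1] : \Psi(x)\}$.  When $\Psi$ is $\Sigma^0_1$ the existence of this supremum is an instance of (d) (or equivalently (f), for coded open sets), and the rest of the argument formalizes routinely in $\mathsf{RCA}_0$.  The sequential form $\mathrm{OI}_3$ then follows from $\mathrm{OI}_1$ by choosing, for a candidate progressive point $x \in [0,1]$, a sequence $x_n \in [0,x]$ witnessing $\mathrm{LCC}^{[0,1]}_{\mathrm{seq}}(\Psi)$.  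For $\mathrm{OI}_4$ I would use that a coded open set $O \subseteq [0,1]$ carries a $\Sigma^0_1$-definable membership predicate, reducing to the formula version.

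The main work, and the crux of the theorem, is the reverse direction.  The simplest target is (b): once $\mathrm{OI}_j \Rightarrow (b)$ is in hand, Simpson's encoding of a one-to-one function's range via a bounded increasing sequence yields $\mathrm{ACA}$.  Given an increasing bounded sequence $(r_n)$ in $[0,1)$ (take $r_n = \sum_{k\leq n} 2^{-f(k)-1}$ for a one-to-one recursive $f$), I define the $\Sigma^0_1$ predicate $\Psi(x) \equiv \exists n\,(r_n > x)$, whose extension is an open downward-closed subset of $[0,1)$.  Since $\Psi(1)$ fails, $\mathrm{OI}_1$ applied contrapositively forces the existence of a witness $x^*$ to the failure of $\mathrm{Prog}^{[0,1]}(\Psi)$, and one checks that this $x^*$ must coincide with $\sup_n r_n$.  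The binary expansion of $x^*$ then decodes $\mathrm{range}(f)$ via $\Delta^0_1$-comprehension, yielding $\mathrm{ACA}$.  For $\mathrm{OI}_3$ the sequence $(r_n)$ itself witnesses the failure of $\mathrm{LCC}^{[0,1]}_{\mathrm{seq}}$ at $x^*$; for $\mathrm{OI}_4$ I would code $O = \bigcup_n [0,r_n)$ directly as a pair of sequences of rational endpoints and run the analogous argument.

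The hardest step will be ensuring that the ``failure point'' extracted from the contrapositive of $\mathrm{OI}_j$ is genuinely accessible inside $\mathsf{RCA}_0$ as a Cauchy-named real, rather than merely being asserted to exist.  The delicate point is that $\mathrm{OI}_j$ is a universal statement whose negation is existential, so harvesting the witness $x^*$ requires a careful definability analysis: I plan to show that $x^*$ is definable as the limit of a $\mathsf{RCA}_0$-constructible sequence by decoupling progressiveness at rational test points (where $\Sigma^0_1$ \textit{vs.}\ $\Pi^0_1$ disjunction is available via $\Sigma^0_1$-induction) from its behavior at the critical real.  A secondary but nontrivial bookkeeping concern is that $\mathrm{OI}_4$ as stated quantifies over open-set codes, so I must be explicit that the codes produced from $\Psi$ live in $\mathsf{RCA}_0$ using Simpson's convention for open sets in a Polish space (II.5.6).
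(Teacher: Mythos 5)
Your overall plan matches the paper's proof: (a)--(f) via \citet{Simpson2009}, (a) $\Rightarrow$ (g)--(i) by formalizing the argument of Proposition \ref{doi} using an instance of (d)/(e)/(f) for the supremum (or, as the paper actually does, $\inf$ of the closed complement), and the reversal via a bounded increasing sequence whose limit is extracted from the contrapositive and then fed into the known equivalence with (b). The small variations -- routing $\mathrm{OI}_3$ through $\mathrm{OI}_1$ rather than proving ACA $\Rightarrow \mathrm{OI}_3$ directly, and unpacking the binary-expansion decoding of $\mathrm{range}(f)$ rather than just citing (b) $\Leftrightarrow$ (a) -- are harmless restylings of the same argument.

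However, the concern you flag as ``the hardest step'' is a red herring, and the definability machinery you propose for it is unnecessary. In $L_2$, a real number \emph{is} a second-order object (a code for a rapidly converging Cauchy sequence), so the real quantifier $\exists x \in [0,1]$ is already a set quantifier. The negation of $\mathrm{Prog}^{[0,1]}(\Psi)$ (resp.\ of $\mathrm{LCC}^{[0,1]}_{\mathrm{seq}}(\Psi)$, resp.\ of the antecedent of $\mathrm{OI}_4$) is literally of the form $\exists x\in[0,1]\,\theta(x)$, so classical contraposition inside $\mathsf{RCA}_0$ hands you the witness $x^*$ as a perfectly good Cauchy-named real with the properties $(\forall y<x^*)\Psi(y)$ and $\lnot\Psi(x^*)$. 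There is no issue of ``access'': having $x^*$ in hand, one checks immediately that it is the least upper bound of $\{r_n : n\in\mathbb N\}$ (the first conjunct says nothing below $x^*$ is an upper bound, the second that $x^*$ is one), hence the limit, which is exactly item (b). Your proposed detour -- reconstructing $x^*$ as the limit of an $\mathsf{RCA}_0$-constructible sequence by ``decoupling progressiveness at rational test points'' -- would only be needed if the theorem asked for an effectively presented witness, which it does not. You should simply drop that paragraph; as written it suggests a gap where none exists and risks obscuring an argument that is otherwise correct.
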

\begin{proof}
The equivalence between the first six items is well-known \citep[IV.2.11, III.2.2]{Simpson2009}, while the equivalence between  $\mathrm{OI}_{1}(\Psi)$ and  $\mathrm{OI}_{1}(\Psi)$ is immediate as $\Sigma_{1}^{0}$-formulas can be effectively converted into codes for open sets, and vice versa,    \cite[II.5.7]{Simpson2009}. %By symmetry, we can also replace `sup' by `inf' in item \eqref{supp} and `open' by `closed'. 
%WD: Replacing \inf C with \with \inf(C)
For the implication (\ref{ac}) $\Rightarrow$ (\ref{plop}), assume $\ACA_{0}$ and let $O$ be a code for an open set as in item \eqref{plop} -- i.e.\ we have for all $y\in [0,1]$ that 
\begin{example}\label{lenk}
$(\forall z\in [0,1])(z<y\di z\in  O)\di y\in O$
\end{example}
Note that fixing $y=0$ in \eqref{lenk}, we (trivially) obtain $0\in O$.   Suppose  $x_{0}\not \in O$ for some $x_{0}\in [0,1]$ -- i.e.\ $C:= [0,1]\setminus O$ is closed and non-empty.  Now consider $\inf(C)$ which satisfies $0<\inf(C)<1$ by assumption and in light of $0\in O$ and the assumption that $O$ is open. 
Now consider the instance of \eqref{lenk} for $y=\inf(C)$, i.e.\
\[
(\forall z\in [0,1])(z<\inf(C)  \di z\in O)\di \inf(C)\in O.
\]  
Since the antecedent holds, so does the consequent.   However, $\inf(C)\in O$ implies that $\{(\inf(C)-r,\inf(C)+r) \subset O$ for some $r>0$.
%WD: Replacing with $B(\inf C, r)\subset O$ for some $r>0$ with its defn
As this contradicts the definition of $\inf(C)$,  $(\forall x\in [0,1])(x\in O)$ must hold.  One derives item \eqref{tenk} in the same way.     

%One similarly proves item \eqref{tenk}, using 
%the fact that a $\Sigma_{1}^{0}$-formula defines a code for an open set (see \cite[II.5.7]{soa}).
%For the forward direction, note that ACA_0 is equivalent to 
%
%Òa non-empty RM-closed set has a sup (and inf)Ó.    
%
%Thus, if the consequent of open induction is false for some RM-open set O, 
%let C be the complement and consider inf C.  The latter real clearly makes 
%the associated antecedent of open induction false.  

%%\smallskip

For (\ref{plop}) $\Rightarrow$ (\ref{ac}), assume item \eqref{plop} and let $(x_n)_{n \in \N}$ be an increasing sequence of 
reals in $[0,1]$.  Consider the (code for an) open set defined as $O=\bigcup_{n\in \N} [0, x_n)$.  % and let $\Phi(x)$ be $(\exists n)( x<x_{n})$.  
%In case $O$ is all of $[0,1]$, we are done as then the limit of $(x_n)_ {n \in \N}$ is 1.  
If $1\not\in O$, the contraposition of $\mathrm{OI}_{4}$ yields that there is $y\in [0,1]$ such that \eqref{lenk} is false, i.e.\ 
\[
(\forall z\in [0,1])(z<y\di z\in  O)\wedge y\not\in O.
\]
Since $y$ is the limit of $(x_n)_{n \in N}$, the Monotone Convergence Theorem as in item \eqref{mct} follows; by the above, we obtain $\ACA_{0}$.  For the derivation of the latter via \eqref{cauchoi}, define $\Phi(x)\equiv (\exists n)(x<x_{n} )$ and use the prior proof. The proof based on \eqref{tenk} is similar. 
\end{proof}
This illustrates that the various forms of Open Induction already have strong consequences about set existence for relatively simple formula.\footnote{The fact that the restriction of  $\mathrm{OI}_{1}(\Psi)$ and $\mathrm{OI}_{3}(\Psi)$ already have this property for $\Sigma^0_1$-formulas is not  in light of the fact that the restriction of Arithmetical Comprehension to $\Sigma^0_1$-formulas is equivalent to the full schema \citep[III.1.3]{Simpson2009}.  More generally, the equivalence of parts (d) and (i) in Theorem \ref{klop} is a case of the observation that Open Induction is, in general, equivalent to the existence of suprema. The latter has traditionally been regarded as an \textsl{impredicative} feature of the real numbers.  This emerges more clearly when we move to higher-order arithmetic, in which case the assumption that Open Induction holds for a given class of formulas can again be shown to be equivalent to the supremum principle for this class over the base theory  $\mathsf{RCA}^{\omega}_0$ . The latter can then be used to show that open sets have second-order codes, yielding the considerably stronger second-order principle $\mathsf{ATR}_0$ by combining Theorems 4.3 and 4.5 in \citep{Normann2020}. \label{weylnote}} Turning now to the covering sorites, we obtain a similar classificatory theorem:
\begin{theorem}\label{klop2} The following statements are provably equivalent over $\mathsf{RCA}_0:$
\begin{enumerate}
\renewcommand{\theenumi}{\alph{enumi}}
\item Weak K\"onig's lemma $\mathrm{WKL}$. \label{wkl}
\item The Heine-Borel theorem for countable coverings. \label{hbi}
\item The Creeping Lemma $\mathrm{CL}(\Psi)$ for $\Sigma_{1}^{0}$-formulas $\Psi(x)$.\label{gonk}
\end{enumerate}
\end{theorem}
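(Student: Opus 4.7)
\medskip

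\noindent\textbf{Proof proposal for Theorem \ref{klop2}.} The equivalence (\ref{wkl}) $\Leftrightarrow$ (\ref{hbi}) is a well-known result in the reverse-mathematical canon (it appears as Theorem IV.1.2 in \cite{Simpson2009}), so the real work lies in tying the Creeping Lemma to these. The plan is to prove the circle (\ref{hbi}) $\Rightarrow$ (\ref{gonk}) $\Rightarrow$ (\ref{hbi}), all over $\mathsf{RCA}_0$.

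For (\ref{hbi}) $\Rightarrow$ (\ref{gonk}), assume the countable Heine--Borel theorem and suppose that the antecedent of $\mathrm{CL}(\Psi)$ holds for some $\Sigma_1^0$-formula $\Psi(x)$: a cover $\mathcal{C}=(O_n)_{n\in\N}$ of $[0,1]$ with $\Psi$ constant on each $O_n$, together with $\Psi(0)$.  First apply Heine--Borel to extract a finite subcover $O_{n_0},\dots,O_{n_k}$ of $[0,1]$.  The rest of the argument is finite combinatorics inside $\mathsf{RCA}_0$. Define a finite graph $G$ on the vertex set $\{0,\dots,k\}$ by placing an edge between $i$ and $j$ whenever $O_{n_i}\cap O_{n_j}\neq\emptyset$ (witnessed by a rational). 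By the constancy condition, $\Psi$-values propagate along edges; so the union of those $O_{n_i}$ lying in the connected component of the index $i_0$ with $0\in O_{n_{i_0}}$ satisfies $\Psi$, while the union of the remaining $O_{n_i}$'s satisfies $\neg\Psi$. These two unions are disjoint open sets whose union is $[0,1]$, and the former contains $0$; connectedness of $[0,1]$ (provable in $\mathsf{RCA}_0$, e.g.\ via the $\mathsf{RCA}_0$-provable intermediate value theorem applied to the associated locally constant $\{0,1\}$-valued function) forces the latter to be empty, whence $\forall x\in[0,1]\,\Psi(x)$.

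For (\ref{gonk}) $\Rightarrow$ (\ref{hbi}), let $\mathcal{D}=(D_n)_{n\in\N}$ be an arbitrary countable open cover of $[0,1]$. Define
\[
\Psi(x)\;\equiv\;\exists n\,\bigl([0,x]\subseteq D_0\cup\cdots\cup D_n\bigr),
\]
which is a $\Sigma_1^0$-formula once we unfold ``$[0,x]\subseteq O$'' in the standard way for open sets coded by rational intervals. Some $D_{n_0}$ contains $0$, so $\Psi(0)$ holds. Moreover $\mathcal{D}$ itself witnesses $\exists\mathcal{C}\,\mathrm{Cov}(\Psi,\mathcal{C})$: if $x,y\in D_n$ and $\Psi(x)$ is witnessed by $m$, then $[0,y]\subseteq[0,\max(x,y)]\subseteq D_0\cup\cdots\cup D_{\max(m,n)}$, so $\Psi(y)$ holds, and symmetrically. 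Applying $\mathrm{CL}(\Psi)$ yields $\Psi(1)$, i.e.\ a finite subcover of $[0,1]$ drawn from $\mathcal{D}$, as required.

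The main technical obstacle I foresee is in the (\ref{hbi}) $\Rightarrow$ (\ref{gonk}) direction, specifically in carrying out the ``connectedness of $[0,1]$'' step inside $\mathsf{RCA}_0$ without accidentally reintroducing a choice principle strong enough to prove WKL on its own. The cleanest way around this is to avoid a general connectedness lemma for arbitrary open sets and instead exploit the finiteness of the subcover: once one has $O_{n_0},\dots,O_{n_k}$, reachability in the overlap-graph $G$ is a bounded computable predicate, and the ``unreachable'' component of $G$ would yield a finite open cover of a nonempty closed-and-open proper subset of $[0,1]$, which can be refuted directly in $\mathsf{RCA}_0$ (for instance, via an $\mathsf{RCA}_0$-provable IVT for piecewise-rational continuous functions). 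The verification that $\Psi$ as defined in the reverse direction really is $\Sigma_1^0$ and is preserved under the $\mathrm{Cov}$ condition is routine but must be carefully matched to the coding of open sets used in \cite{Simpson2009}.
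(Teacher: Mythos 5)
Your proposal takes essentially the same route as the paper: the equivalence of \eqref{wkl} and \eqref{hbi} is cited from \citep{Simpson2009}, the direction \eqref{gonk}$\Rightarrow$\eqref{hbi} is proved by the same choice of predicate $\Psi(x)\equiv\exists n\,([0,x]\subseteq\bigcup_{i<n}O_i)$ together with an application of $\mathrm{CL}(\Psi)$, and the direction \eqref{hbi}$\Rightarrow$\eqref{gonk} passes to a finite subcover and propagates $\Psi$ through overlapping intervals starting from the one containing $0$. The only divergence is cosmetic: where the paper closes the chaining argument by invoking $\Sigma^0_1$-induction (``continuing in this fashion \ldots after finitely many steps''), you instead make the implicit connectedness of $[0,1]$ explicit via the overlap graph and an $\mathsf{RCA}_0$-provable clopen/IVT argument -- a legitimate fleshing-out of the same idea rather than a genuinely different proof.
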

\begin{proof}
The equivalence between the first two items is well-known \citep[IV.1]{Simpson2009}.
Now assume the Creeping Lemma  as in item \eqref{gonk} and consider a countable covering $(O_{i})_{i\in \N}$ of $[0,1]$.  Define $\Psi(x)\equiv (\exists j)( [0, x] \subset \bigcup_{i<j} O_{i} )$, which can be shown to be $\Sigma_{1}^{0}$ relative to coding of open sets described above.
We also have $\Psi(0)$ and $\exists \mathcal{C}\mathrm{Cov}(\Psi,\mathcal{C} )$.  Using item $\mathrm{CL}(\Psi)$, we obtain $\Psi(1)$, i.e.\ $(O_{i})_{i\in \N}$ has a finite sub-covering of $[0,1]$.  Thus, item \eqref{hbi} follows as required.   

%%\smallskip
\enlargethispage{7ex}

To prove the Creeping Lemma as in item \eqref{gonk}, assume item \eqref{hbi} and let $(O_{i})_{i\in \N}$ be a covering of $[0,1]$.  Define $\Psi(x)$ be as above and note that this covering again witnesses  $\exists \mathcal{C}\mathrm{Cov}(\Psi,\mathcal{C})$.   Using item \eqref{hbi}, there is $j\in \N$ such that $[0,1]\subset \cup_{i<j}O_{i}$.  Since we assume $\Psi(0)$, we have $\Psi(z)$ for all $z \in O_{i_{0}}$ where $i_0$ is such that $0\in O_{i_{0}}$.  Since $[0,1]\subset \cup_{i<j}O_{i}$, there is $i_{1}<j$ with $O_{i_{0}}\cap O_{i_{1}}\ne \emptyset$, implying $\Psi(z)$ for all $z \in O_{i_{0}}\cup O_{i_{1}}$. Continuing in this fashion, one concludes $\Psi(z)$ for all $z \in \cup_{i<j}O_{i}$ after finitely many steps.   This reasoning can be formalized using $\Sigma_{1}^{0}$-induction. 
\end{proof}

WKL does not imply ACA over $\mathsf{RCA}_0$ \citep[VIII.2.12]{Simpson2009}.  The equivalence of WKL and CL thus illustrates that the continuous sorites requires \textsl{strictly stronger} mathematical principles than the covering sorites.   This substantiates our claim that they are distinct argument forms.\footnote{It is also natural to inquire further after the reverse mathematical status of Weber \& Colyvan's topological sorites for the space $[0,1]$ endowed with the Euclidean topology.   In this case the argument purports to show that if $U = \{x \in [0,1] : \Phi(x)\}$ and $V = \{x \in [0,1] : \neg \Phi(x)\}$ are both open and such that $U \cup V = [0,1]$, then if $U \neq \emptyset$ then in fact $U = [0,1]$ and thus also $\Phi(1)$.  This is  a reformulation of the  \textsl{connectedness of the unit interval} -- i.e.\ $[0,1]$ cannot be partitioned into disjoint open sets.  But the strength of this statement depends on how the definition is formalized -- e.g.\ it can be either at the level of $\RCA_{0}$ or $\ACA_{0}$, depending on how the complement of a set is given.  Since the covering sorites is at the intermediate level of $\mathsf{WKL}_0$, this is consistent with our claim that the covering sorites should be regarded as a distinct argument form.   But it also illustrates why it is more difficult to formalize the topological sorites in a robust manner.\label{topnote2}}  At the same time, the original proofs of Heine-Borel Theorem (such as the one given in \S\ref{S5}) all made use of principles implying ACA.  The separation of the two arguments thus also emerges as a distinctive contribution of reverse mathematics to the study of the sorites.\footnote{The observation that results based on compactness arguments often do not require set existence assumptions as strong as results relying on the continuity properties of the reals was itself not historically immediate. The systematization of such separations can thus be seen as a distinctive contribution of reverse mathematics to the arithmetization of analysis.  See \citep{Dean2017b}.} But the fact that CL \textsl{entails} WKL shows that the reasoning of the covering sorites still depends on non-constructive principles.   For while $\mathsf{WKL}_0$ is unlike $\mathsf{ACA}_0$ in that it does not prove the existence of \textsl{specific} non-computable sets, it still implies some such sets must exist \citep[XIII]{Simpson2009}.  We will now see how such computability-theoretic facts may be used to construct models in which the premises of both forms of the sorites hold but their putatively paradoxical conclusions fail.

\section{Resolutions}
\label{S7}

The continuous sorites and covering sorites rest on two sorts of major premises: 
\begin{enumerate}[(P1)]
\item  The schemas $\mathrm{Open}(\Psi)$, $\mathrm{Prog}^{[0,1]}(\Psi)$, $\mathrm{LCC}^{[0,1]}_{\sup}(\Psi)$, $\mathrm{LCC}^{[0,1]}_{\mathrm{seq}}(\Psi)$, and $\exists \mathcal{C}\mathrm{Cov}(\Psi)$ which we will continue to refer to collectively as \textsl{tolerance principles}.   
\item The schemes $\mathrm{OI}_1(\Psi)$, $\mathrm{OI}_2(\Psi)$, $\mathrm{OI}_3(\Psi)$, and $\mathrm{CL}(\Psi)$  which Theorems of \ref{klop} and \ref{klop2} provide further justification for calling \textsl{mathematical principles}.    
\end{enumerate}
The latter class are all conditional in form while the former serve as their antecedents in the arguments (\ref{csarg}$'$) and (\ref{cs}).  To block the reasoning it hence suffices to reject \textsl{either} (P1) \textsl{or} (P2).   We will refer to these as \textsl{Type-1} and \textsl{Type-2} responses.   

%The first alternative may be likened to the traditional response of supervaluationists and epistemicists to the sorites argument consider in \S \ref{S2}. But there is no apparent opportunity for invoking the second sort of response in such cases as the mathematical presuppositions of arguments like (\ref{fm}) seem to be more minimal.\footnote{The mathematical presuppositions of traditional sorites arguments are examined in greater detail in \citep{Dean2018a} -- e.g.\ in regard to the possibility of rejecting subschema of mathematical induction which are weaker than that subsumed within $\mathsf{RCA}_0$.   On the other hand,  we have argued in section \ref{S2} that the paradigmatic form of the conditional sorites argument for predicates like \textsl{short} implicitly relies on H\"older's Theorem.  But we have also seen in \S \ref{S5} that this is result is constructive in virtue of being provable in $\mathsf{RCA}_0$.}

To see how the prior analysis bears on these options consider:
\begin{corollary}\label{gohan}
There is a model $\mathcal{M} \models \RCA_{0}$ and $\Sigma^0_1$-formulas $\Sigma(x)$ and $\Pi(x)$ such that $\mathrm{OI}_1(\Sigma),\mathrm{OI}_3(\Sigma),\mathrm{OI}_4(\Sigma)$ and $\mathrm{CL}(\Pi)$ are false in $\mathcal{M}$.
\end{corollary}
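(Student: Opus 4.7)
The plan is to take $\mathcal{M}$ to be the minimal $\omega$-model $\REC$ of $\RCA_{0}$, in which neither $\WKL$ nor $\ACA$ holds. The statement then amounts to exhibiting explicit $\Sigma^{0}_{1}$-witnesses to the failures of Open Induction and the Creeping Lemma guaranteed by Theorems~\ref{klop} and \ref{klop2}: for the first I would use a Specker sequence, and for the second a recursive countable cover of $[0,1]$ with no finite subcover, whose existence in $\REC$ is equivalent to the failure of $\WKL$.

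\emph{Construction of $\Sigma$.} Fix a one-one recursive enumeration $(k_{n})_{n\in \N}$ of the halting set $K$ and put $q_{n}:=\sum_{i\leq n} 2^{-k_{i}-1}$. Then $(q_{n})$ is a strictly increasing recursive sequence of rationals in $(0,1)$ whose limit $r$ codes $K$ bit-by-bit and hence is not computable. Let $\Sigma(x)\equiv (\exists n)(x<q_{n})$, a $\Sigma^{0}_{1}$-formula whose extension, viewed as an open subset of $[0,1]$, has a recursive code equivalent to $[0,r)$. I would then check inside $\REC$ that $\Sigma(0)$ and $\mathrm{Open}(\Sigma)$ are immediate, and that both $\mathrm{Prog}^{[0,1]}(\Sigma)$ and $\mathrm{LCC}^{[0,1]}_{\mathrm{seq}}(\Sigma)$ hold by the following dichotomy on recursive reals: every recursive $y\in[0,1]$ satisfies either $y<r$, in which case $\Sigma(y)$ holds automatically, or $y>r$, in which case any rational $z$ with $r<z<y$ is recursive and witnesses $\neg\Sigma(z)$, collapsing the antecedent in each scheme; the remaining case $y=r$ is ruled out because $r\notin\REC$. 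The case for $\mathrm{LCC}^{[0,1]}_{\mathrm{seq}}(\Sigma)$ uses in addition that if a recursive sequence of points $<r$ converges to a recursive $x\in[0,1]$, then $x\leq r$, hence $x<r$ by recursiveness. Since however $1\in[0,1]^{\REC}$ and $\Sigma(1)$ is false, all three of $\mathrm{OI}_{1}(\Sigma)$, $\mathrm{OI}_{3}(\Sigma)$ and $\mathrm{OI}_{4}(\Sigma)$ have true antecedent and false conclusion in $\mathcal{M}$.

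\emph{Construction of $\Pi$ and main obstacle.} Since $\WKL$ fails in $\REC$, Theorem~\ref{klop2}(b) supplies a code for a countable open cover $(O_{i})_{i\in \N}$ of $[0,1]$ in $\mathcal{M}$ admitting no finite subcover. Mimicking the proof of Theorem~\ref{klop2}(c), set $\Pi(x)\equiv (\exists j)\bigl([0,x]\subset \bigcup_{i<j}O_{i}\bigr)$, which is $\Sigma^{0}_{1}$ relative to the code of $(O_{i})$. The same family $(O_{i})$ witnesses $\exists \mathcal{C}\mathrm{Cov}(\Pi,\mathcal{C})$: if $x\leq y$ both lie in some $O_{i}$, then $[x,y]\subset O_{i}$, so any finite subcover of $[0,x]$ together with $O_{i}$ covers $[0,y]$; the converse direction is immediate from $[0,x]\subset[0,y]$. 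Moreover $\Pi(0)$ holds since $0$ lies in some $O_{i_{0}}$. Yet $\Pi(1)$ would exhibit a finite subcover of $[0,1]$, contradicting the choice of $(O_{i})$, so the conclusion of $\mathrm{CL}(\Pi)$ fails in $\mathcal{M}$. The main obstacle throughout is the internal verification that the tolerance-type antecedents really hold in $\REC$; this rests in each case on the fact that the non-recursive ``gap'' responsible for the failure of $\ACA$ or $\WKL$ (the point $r$, respectively the least $y$ beyond every initial union $\bigcup_{i<j}O_{i}$) is invisible to $\mathcal{M}$, because recursive reals are pairwise separable by recursive rationals and never coincide with this gap.
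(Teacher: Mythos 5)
Your proposal is correct and follows essentially the same route as the paper: take $\mathcal{M} = \textsc{Rec}$, witness the failure of the forms of Open Induction via the $\Sigma^0_1$-formula $\Sigma(x)$ arising from a Specker sequence (using the same dichotomy argument the paper sketches in a footnote, that any computable $y$ must lie strictly on one side of the non-computable limit $r$), and witness the failure of $\mathrm{CL}$ via the $\Sigma^0_1$-formula $\Pi(x)$ arising from a computable cover of $[0,1]^{\textsc{Rec}}$ with no finite subcover. The only superficial difference is that you invoke the failure of WKL/Heine–Borel in $\textsc{Rec}$ to obtain such a cover, whereas the paper exhibits a singular covering explicitly in its Appendix; these amount to the same thing.
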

\noindent 
In light of Theorems \ref{klop} and \ref{klop2}, we may take $\mathcal{M}$ to be the model $\textsc{Rec}$.  The formulas in question can then be obtained from traditional constructions known as \textsl{recursive counterexamples} -- i.e.\ computable objects  satisfying the antecedent of a classical theorem for which there does not exist a computable witness to its consequent.  For  since $\mathsf{RCA}_0$ proves the existence of objects satisfying the following definitions, it follows that the Monotone Convergence Theorem (item b) in Theorem \ref{klop}) and the Heine-Borel Theorem are both \textsl{false} when interpreted in the \textsl{computable reals} -- i.e.\ the sets $\mathbb{R}^{\textsc{Rec}}$ satisfying the $L_2$-definition \textnormal`$x$ \textsl{is a real number'} in the model $\textsc{Rec}$.

\begin{definition} A  \textnormal{Specker sequence} $(s_{n})_{n\in \N}$ is a computable, increasing, and bounded sequence of real numbers such that $s = \lim_{n \rightarrow \infty} s_n$ is not a computable real number.
\end{definition}

\begin{definition} A \textnormal{singular covering} consists of two computable sequences of $(s_{n})_{n\in \N}$ and $(t_{n})_{n\in \N}$ of computable real numbers such that $\mathcal{S} = \{(s_i,t_i) : i \in \mathbb{N}\}$ covers $[0,1]^{\textsc{Rec}}$ -- i.e.\ the class of computable real numbers in the unit interval -- but is such that $\sum_{i \in \mathbb{N}} |s_i - t_i| < 1$.   
\end{definition}

The relevant features of such objects (as reprised in the Appendix) are as follows:
\begin{enumerate}[(R1)]
\item From the perspective of the \textsl{classical continuum} $\mathbb{R}$ -- e.g.\ as characterized by Dedekind Completeness --  $\mathbb{R}^{\textsc{Rec}}$ contains ``gaps'' corresponding to non-computable real numbers.  If $(s_{n})_{n\in \N}$ is a Specker sequence and  $s = \lim_{n\in \mathbb{N}} s_n = \sup\{s_{n} : n\in \N\}$ then $s \in \mathbb{R} \setminus \mathbb{R}^{\textsc{Rec}}$ illustrates such a gap.   
\item If $\mathcal{S} = \{(s_i,t_i) : i \in \mathbb{N}\}$  is a singular covering,  then for each $x \in \mathbb{R}^{\textsc{Rec}}$  there exists an $i$ such that $x \in (s_i,t_i)$.  But since $\sum_{i \in \mathbb{N}} |s_i - t_i| < 1$, any attempt to cover all of $[0,1]^{\textsc{Rec}}$ must ``leave a gap'' in $[0,1]$.  As such, we cannot pass from $0$ to $1$ via any finite set of overlapping intervals $\mathcal{S}_0 = \{(s_{i_j},t_{i_j}) : j < k\}$.
\end{enumerate}
These properties are already suggestive of how the computable real numbers may be useful for providing a semantics for vague predicates.\footnote{Philosophers will find the use of the term ``gap'' evocative of views which posit ``truth values gaps'' as discussed below.  But this term also evokes talk of ``truth value \textsl{gluts}'' and ``overloaded boundaries'' as considered in approaches based on paraconsistent logic.   These proposals cannot be considered in detail here.   But it may still be noted that constructive analysis as also discussed below can be understood as an explicit (and historically prior) counterpoint to the paraconsistent form of analysis and topology envisioned in \citep{Weber2021}. \label{paranote}}   But it is not initially clear whether they motivate a Type-1 or a Type-2 response to our current paradoxes.  

Consider first a Specker sequence $(s_n)_{n \in \mathbb{N}}$ with $s = \lim_{n \rightarrow \infty} s_n$ and define  
\begin{example}
\label{ws}
$\Sigma(x) := 0 \leq x  \ \wedge \ \exists n(x < s_n)$
\end{example}
The set $S = \{x \in \mathbb{R}: \Sigma(x)\}$ will thus be an interval $[0,s)$. It may be confirmed that the premises of the continuous sorites $\Sigma(0)$, $\mathrm{Open}(\Sigma)$, $\mathrm{Prog}^{[0,1]}(\Sigma)$, $\mathrm{LCC}^{[0,1]}_{\sup}(\Sigma)$, and $\mathrm{LCC}^{[0,1]}_{\mathrm{seq}}(\Sigma)$ all hold in $\textsc{Rec}$.\footnote{For instance $\mathrm{Open}(\Sigma)$ holds because $(s_n)_{n \in \mathbb{N}}$ is increasing.   For $\mathrm{Prog}(\Sigma)$ consider an arbitrary $x \in \mathbb{R}^{\textsc{Rec}}$ and suppose that $\forall y(y < x \rightarrow \Sigma(y))$.   In this case, we cannot have $s < x$.   But since $s \not\in \mathbb{R}^{\textsc{Rec}}$, it thus follows that $x < s$ and thus since $s = \lim_{n \rightarrow \infty} s_n$, there is an $m$ s.t. $x < s_m$.   But then $\Sigma(x)$ holds by definition.  The arguments for $\mathrm{LCC}^{[0,1]}_{\sup}(\Sigma)$ and $\mathrm{LCC}^{[0,1]}_{\mathrm{seq}}(\Sigma)$ are more cumbersome since $\sup(S)$ and $\lim_{n \rightarrow \infty} s_n$  disguise  descriptions which are non-denoting in $\textsc{Rec}$.}   But since $s = \sup(S) < 1$, it also follows that $\neg \Sigma(1)$.   It also follows from the construction given in the Appendix that there is a total computable function $f(n)$ such that $f(n) =$ (the code of) $s_n$. $\Sigma(x)$ thus exemplifies a $\Sigma^0_1$-predicate for which each of the principles  $\mathrm{OI}_1,\mathrm{OI}_3$ and $\mathrm{OI}_4$ fail in $\textsc{Rec}$.  

A singular covering can be similarly employed to show how the premises of the covering sorites -- other than CL itself -- can hold in $\textsc{Rec}$ without entailing a contradiction.  For let $\mathcal{S} = \{(s_i,t_i) : i \in \mathbb{N}\}$ be a such a covering and define 
\begin{example}
\label{cs}
$\Pi(x) := \exists n([0,x] \subseteq \bigcup_{i < n} (s_i,t_i))$
\end{example}
It is easy to see that that $\Pi(0)$ and that $\forall i \forall x,y \in (s_i,t_i)(\Pi(x) \leftrightarrow \Pi(y))$ hold in $\textsc{Rec}$.\footnote{Suppose $x,y \in (s_i,t_i)$ and $\Pi(x)$.   Then there is $k$ is s.t. $[0,x] \subseteq \bigcup_{j < k} (s_j,t_j)$.   But since $[0,y] \subseteq \bigcup_{j < k} (s_j,t_j) \cup \{(s_i,t_i)\}$,  $[0,y] \subseteq \bigcup_{j < m+1} (s_j,t_j)$ for $m = \max(i,k)$.   Thus $\Pi(y)$ as well.}  $\mathcal{S}$ thus witnesses $\textsc{Rec} \models \exists \mathcal{C}\mathrm{Cov}(\mathcal{C},\Pi)$.  But since $\mathcal{S}$ does not admit a finite sub-cover in $\textsc{Rec}$, $\textsc{Rec} \not\models \Pi(1)$.   Thus $\Pi(x)$ is an example of $\Sigma^0_1$-predicates for which $\mathrm{CL}$ fails in  $\textsc{Rec}$.  Similarly if we let $s' = \sup \{x \in \mathbb{R}: \Pi(x)\}$ then we can see that $s' \in \mathbb{R} \backslash \mathbb{R}^{\textsc{Rec}}$ -- i.e.\ the supremum of the set of points reachable from $0$ via a finite sequence of overlapping intervals in $\mathcal{S}$ -- must itself be a non-computable real.

What one makes of these constructions may depend on  one's views about \textsl{both} the proper response to traditional discrete forms of the sorites and also the proper foundations for real analysis and its applications.   For instance if one maintains the classical view that the mathematical continuum is appropriately characterized by properties such as Dedekind Completeness, then one will presumably accept a mathematical theory at least strong as $\mathsf{ACA}_0$.  In this case, one will accept the mathematical principles (P2) and thus presumably be predisposed to reject one of the principles under (P1) in the manner of a Type-1 response to the sorites.  

But even in this case, properties (R1) and (R2) of $\mathbb{R}^{\textsc{Rec}}$ may still play a role.  For recall that the rejection of tolerance principles has traditionally been regarded as an inevitable -- although potentially undesirable -- consequence of theories of vagueness known as \textsl{supervaluationism} (e.g.\ after \citealp{Fine1975}) and \textsl{epistemicism} (e.g.\ after \citealp{Williamson1994}). In particular, both supervaluationists and epistemicists hold that principles like (\ref{tol1}) will have false instances.  

Supervaluationists, for instance, hold that a statement $\Delta$ containing a vague predicate $\Phi_0(x)$ with field $A$ should be counted as true only if it is in fact \textsl{supertrue}.  If we let $P^+ \subset A$ be the \textsl{definite extension} of $\Phi_0(x)$, this is to say that $\Delta$ is satisfied in all interpretations in which $\Phi_0(x)$ corresponds to an \textsl{admissible precisification} $P \supseteq P^+$ -- i.e.\ a means of exhaustively deciding \textsl{indefinite} (or \textsl{borderline}) cases so that $P \cup \overline{P} = A$ and also $P \cap P^- = \emptyset$ where $P^-$ is the \textsl{definite anti-extension} $P^-$ of $\Phi_0(x)$.   But of course each such precisification $P$ will yield an instance $b \in A$ making the antecedent of principles like (\ref{tol1}) true but its consequent false.  

But now observe that the predicate $\Sigma(x)$ will determine an interval $[0,s)$ whose boundary occurs at a non-computable real $s \in \mathbb{R} \backslash \mathbb{R}^{\textsc{Rec}}$.   On the other hand, it seems reasonable to maintain that the definite extension $S^+$ of $\Sigma(x)$ must be fixed by a sequence of observations which can be carried out so as to determine an interval of the form $[0,t]$ or $[0,t)$ where $t < s$ is a \textsl{computable} real number -- e.g.\ as resulting from a measurement procedure with a computable modulus of convergence similar to the envisioned construction
in the proof of Theorem \ref{holderthm0} described above.\footnote{What is conventionally meant  by saying  $\Phi_0(a)$ is \textsl{definitely true} is that $a$ is \textsl{not} a borderline of $\Phi_0(x)$.  In the current context, this suggests that we may determine if $\Phi(x)$ holds for $\phi(a)$ by some fixed approximation to its actual value -- e.g. by examining $k$ digits in its decimal expansion.   Of course, we may not know the value of $k$ in advance or even if the definite extension $S^+$ of $\Phi(x)$ is an open or closed interval.  But since such a value is presumably fixed independently of $a$  (e.g. by our perceptual thresholds),  it still seems as if $S^+$ ought to have computable endpoints.}

Putting aside the question of whether non-computable real numbers \textsl{exist}, there is still the further question of whether $S \supseteq S^+$ must be acknowledged as an \textsl{admissible} precisification of the definite extension of $\Sigma(x)$.   For suppose there is a rationale for \textsl{excluding} such sets from the  class of precisification which must be considered in a supervaluationalist semantics.   It will then follow that even though the relevant instances of the tolerance principles falling under (P1) are \textsl{false} in $\mathcal{N}$, they will be \textsl{supertrue} when we restrict attention to precisifications corresponding to intervals whose endpoints exist in the model $\textsc{Rec}$.  This conclusion might in turn be embraced by supervaluationists who must otherwise provide an account of \textsl{why} the intuitions underlying tolerance principles are misleading.\footnote{In Fine's original formulation, the notion of an \textsl{admissible precisification} is ``officially'' declared to be primitive \citeyearpar[p. 272]{Fine1975}.  However ``unofficially'' it is taken to be induced by an ``\textsl{appropriate} [specification of cases] in accordance with the intuitively understood meanings of the predicate'' (p. 268).  It is thus at least open to supervalationists to accord computability-theoretic considerations a role in a supervaluational semantics understood (e.g.) as a theory of the precisifications which natural language users are able to consider in applying the relevant expressions.  }       

It also seems possible for epistemicists to similarly avail themselves of the features of properties (R1) and (R2) of $\mathbb{R}^{\textsc{Rec}}$.  For recall that such theorists are also compelled to reject tolerance principles because they wish to adhere to classical (in particular \textsl{non-supervaluational}) semantics. This in turn requires positing a sharp boundary between the extension of a vague and its anti-extension.  Epistemicists then attempt to mitigate the necessity of abandoning tolerance intuitions by offering an auxiliary account of why we cannot \textsl{know} where such a boundary is located.  

In the context of a discrete sorites like (\ref{fm}), the boundary between the extension of $\Phi_0(x)$ and its anti-extension must coincide with a fixed object $a_i$ in a sequence $a_0, \ldots,a_{n}$.  Taking into account that the measurement procedures which we have suggested in \S\ref{S2} are required to formalize the relevant argument, it seems likely the corresponding numerical boundary $\phi(a_i)$ will be given by a \textsl{rational number} $q_i \in \mathbb{Q}$.\footnote{This is so because the sequence $a_0, \ldots,a_{n}$ must presumably be found by a uniform process similar to using a ruler or pan balance relative to a fixed unit $c$.  But also note that all that is required by such a sequence is that the difference between the magnitudes $a_j$ and $a_{j+1}$ is less than some appropriate just noticeable difference  $d$.   Once a measurement function $\phi : A \rightarrow \mathbb{R}$ is in place, it suffices for the argument to represent the elements $a_j$ by rational multiples of $\phi(c)$ which can itself be normalized to the value of the form $10^{-n}$ as in the example considered in \S\ref{S2}.}  But since such a value is finitely specifiable, it is then difficult to substantiate the epistemicist's claim that its value cannot be known \textsl{even in principle}.

But of course the situation is again different in the case of the continuous and covering sorites wherein the relevant boundaries are determined by a real number. The constructions under consideration additionally call attention to the fact that such a value may be non-computable.  But when we examine the methods by which such values might be determined, it seems there is indeed even an \textsl{in practice} sense in which we cannot come to learn their exact values.  This in turn suggests that epistemicists may also find it useful to employ recursive counterexamples to illustrate the sense in which they deem such boundaries to be unknowable.\footnote{In Example \ref{sse} $s := \lim_{n \rightarrow \infty} s_n$ is given so that the current value $s_i$ may ``jump'' by as much as $\frac{1}{2}$ arbitrarily late in its construction (e.g.\ if $0$ is enumerated into $A$ by the computable function $f$ only for a large input $n$).    This is in contrast to a procedure which determines a real number with a computable modulus of convergence such as is given by Theorem \ref{holderthm0} -- i.e.\ so that at any given stage we know how close our current approximation is to ultimate value.  In light of this, a Specker sequence might be taken to illustrate Williamson's claim that ``meaning may supervene on use in an unsurveyably chaotic way'' \citeyearpar[p. 209]{Williamson1994}.   More generally, such constructions provide a means of making good on the persistent hopes of epistemicists to explain the unknowability they posit for the boundaries of vague predicates via analogies with mathematical incompleteness or ``absolute undecidability'' (see, e.g, \citealp[\S 7.4]{Williamson1994}). \label{compnote}}

Turning now to Type-2 responses, readers from logic will be aware that such constructions were originally presented within foundational programs which seek to \textsl{deny} various features of the classical continuum.  In this context Specker sequences and singular covers serve as \textsl{weak counterexamples} to continuity properties of $\mathbb{R}$ or to its compactness -- \textsl{if} we accept such principles, \textsl{then} these constructions show that we must also accept non-constructive principles.  Within the historical development of constructivism, the availability of independent arguments \textsl{against} such principles is typically presupposed.\footnote{For instance the  existence of the supremum of a Specker sequence or the compactness of the unit interval are inconsistent with $\mathrm{CT}_0$ -- i.e.\ the intuitionistic form of Church's Thesis asserting that every function on $\mathbb{N}$ is computable (see, e.g., \citealp[pp. 268-309]{Troelstra1988}).}  Practicing constructivists have shown little interest in the interpretation of non-mathematical language.  But readers from philosophy will be aware that there is a related tradition of attempting to reply to sorites arguments by \textsl{rejecting} -- or at least failing to assert -- that vague predicates are \textsl{bivalent}.   

A precedent can be traced to \citet{Bernays1935aaa} who observed a contradiction could be avoided by abstaining from applying the law of the excluded middle to the evidently soritical predicate \textsl{feasible natural number}  -- i.e.\ one up to which we can count in practice.   This example was famously repurposed by \citet{Dummett1975} as part of his critique of strict finitism.\footnote{See \citep{Dean2019} for an account of the intellectual transmission.} Building on Dummett's broader view that a \textsl{theory of meaning} for natural language ought to be based on intuitionistic logic, \citep{Wright2019a} reiterated the proposal that semantics for vague predicates should be based on an intuitionistic understanding of the logical connectives. This proposal was recently put into a more precise form by \citep{Bobzien2020}. 

\enlargethispage{5ex}

These considerations are relevant here in light of our prior characterization of $\mathbb{R}^{\textsc{Rec}}$ as containing ``gaps'' corresponding to non-computable real numbers.  This naturally invokes comparisons not only to failures of excluded middle but also to approaches which posit \textsl{truth value gaps} for vague predicates.  Indeed the basic analogy on which the aforementioned theorists build is the claim that bivalence can only be asserted for \textsl{decidable} predicates.  For on the one hand, mathematical examples motivating the non-assertability of $\forall x(\Phi(x) \lor \neg \Phi(x))$ relative to the \textsl{proof} (or BHK) \textsl{interpretation} of the logical connectives make use of cases in which $\forall x \Phi(x)$ is undecided (or even \textsl{undecidable}).\footnote{See, e.g., \citep[pp. 10-16]{Troelstra1988}.}  On the other, philosophical accounts often characterize vague predicates $\Phi_0(x)$ as ones for which there exist borderline cases $a \in A$ such that the truth value of $\Phi_0(a)$ is difficult (or even \textsl{impossible}) to decide.

It would thus seem that theorists inclined towards constructivist approaches to vagueness may find Type-2 responses congenial.  For if $(s_n)_{n \in \mathbb{N}}$ is a Specker sequence with limit $s$, then the $L_2$-formalization of $\Sigma(s)$ is \textsl{formally undecidable} in $\mathsf{RCA}_0$.\footnote{The term $\lim_{n \to \infty} s_n$ need not denote in a given model of $L_2$.  But it can be seen that the assertion that $\Sigma(x)$ is satisfied by its classical denotation $s \in \mathbb{R} \backslash \mathbb{R}^{\textsc{Rec}}$ holds in $\textsc{Rec}$ (as the relevant set existence presupposition fails) but fails in the full model $\mathcal{N}$ in virtue of the definition of $s$.} If we then apply the proof interpretation to $\Sigma(s) \lor \neg \Sigma(s)$, this provides a basis for failing to assert $\forall x(\Sigma(x) \lor \neg \Sigma(x))$.\footnote{\citep{Bobzien2020} suggest that vague language should be interpreted into the language of modal logic via a version of the G\"odel embedding of intuitionistic logic into $\mathsf{S4}$.   On this interpretation, $\Sigma(s) \lor \neg \Sigma(s)$ is mapped to a sentence equivalent to  $\Box \Sigma(s) \lor \Box \neg \Sigma(s)$.  While G\"odel originally suggested reading  $\Box \varphi$  as ``it is informally provable that $\varphi$'',  Bobzien \& Rumffit's proposed gloss for vague language is ``it clear that $\varphi$'' (understood as abbreviating ``$\varphi$ and it is not borderline whether $\varphi$''). The current observation is thus that a further analysis of the sort of provability-cum-clarity at issue can be obtained by considering the interpretation of $\Box$ as provability in $\mathsf{RCA}_0$.  In this case $\Box \Sigma(s) \lor \Box \neg \Sigma(s)$ \textsl{demonstrably fails} without the need for other assumptions.   (For well-known reasons, the corresponding modality will not satisfy the T axiom of $\mathsf{S4}$.  But the evidential reading of ``clarity'' on which the constructivist approach to vagueness is presumably based does not on its own seem to prefer informal to formal provability.)} But this seems tantamount to holding open that either of its disjuncts might be true.  But in doing so, one must also presumably acknowledge the existence of models like $\textsc{Rec}$ (in addition to the full model $\mathcal{N}$) that can serve as felicitous interpretations of vague predicates despite the fact that they fail to satisfy the principles under (P2).  On its own, however, this does not recommend intuitionistic logic as a medium for reasoning about them.\footnote{This is so because $\textsc{Rec}$ -- like all models of $L_2$ -- satisfies classical logic.    The reason why the arguments of the continuous and covering are blocked in this structure is thus due to the failure of a \textsl{mathematical principle} rather than a \textsl{logical law}.  This illustrates how the discernment of additional structure in the premises of sorites arguments opens novel means of reply which are not bound up with questions about the use of non-classical logic.}

\section{Conclusion} \label{S8}
\enlargethispage{2ex}

The foregoing illustrates several connections between the reverse mathematical analysis we have provided and established philosophical approaches to vagueness.  For instance if foundational programs like constructivism are viewed as providing accounts of \textsl{applied mathematics}, then they can be understood as providing principled reasons for adopting a Type-2 response without a detour through meaning-theoretic considerations which motivate the proposals just described.\footnote{The situation is complicated by the fact that some of the principles which have been taken to describe the intuitionistic continuum (e.g.\ the Fan Theorem) are incompatible with $\mathrm{CT}_0$.  Historically recognized forms of constructivism may thus interact in different ways with the continuous and covering sorites.  This can be at least partially accounted for in the setting of reverse mathematics by considering $\omega$-models $\mathcal{M} \models \mathsf{WKL}_0$ which do not also satisfy ACA (e.g.\ the  \textsl{low subsets} of $\mathbb{N}$) and noting that $\mathsf{WKL}_0$ has been taken to describe a standpoint intermediate between constructive analysis and the full Brouwerian continuum (see, e.g., \citealp[I.12, VIII.2]{Simpson2009}).  But we postpone detailed consideration of the application to vagueness to another occasion.}  But another orientation is suggested by the analysis of results from measurement theory such as Proposition~\ref{holderrca0}.  We have suggested that this is required to formulate discrete sorites arguments for concrete or perceptual continua.  But we can now see this is compatible with viewing the computable reals $\mathbb{R}^{\textsc{Rec}}$ as an adequate representation.   

On the other hand, the continuous and covering sorites emerge as natural generalizations whose motivation is arguably implicit in that of traditional discrete forms.  But as Theorems \ref{klop} and \ref{klop2} attest, to arrive at a formal contradiction in these cases we must engage in reasoning reliant on mathematical principles whose adoption requires us to abandon $\mathbb{R}^{\textsc{Rec}}$ as a model of the envisioned structures.   This opens the door to another sort of reaction which requires neither advancing constructive scruples nor navigating the exigencies of views like supervaluationism and epistemicism.  In particular, such results can be taken to illustrate that reasoning of the paradoxes can be resisted on the basis of implicitly relying on an \textsl{overly idealized} conception of the continua involved in everyday reasoning about vague predicates.  This would also appear to counsel a Type-2 response -- but in this case one directed towards the applications of analysis in certain domains rather than its foundations.  

\appendix

\section{Recursive counterexamples}
\enlargethispage{2ex}

\begin{exa}[Specker sequences\footnote{See \citep[\S IV]{Beeson1985} for the historical origins of these constructions.}]
\rm
\label{sse}
Let $f:\N \rightarrow \N$ be a recursive function which enumerates without repetitions some non-recursive set $A$ -- e.g.\ a 1-1 enumeration of the halting problem $K$.  Define a computable sequence of rationals as follows:
\begin{example}\label{speck}
$\textstyle s_{n}:= \sum_{m\leq n}  \frac{1}{2^{f(m)+1}}$
\end{example}
Clearly, $(s_{n})_{n\in \N}$ is increasing and bounded.   Thus classically $s:=\lim_{n \rightarrow \infty} s_n$ determines a real number.  It is now easy to see that
\begin{example}\label{bink}
$\textstyle
(\exists m)(f(m)=k) \leftrightarrow (\forall n)(|s-s_{n}|<\frac{1}{2^{k}} \rightarrow (\exists i\leq n)(f(i)=k))$
\end{example}
where the left-hand side expresses that $k$ is the set enumerated by $f(x)$.  $s$ can be coded by a set  $S \subseteq \mathbb{N}$ in the manner describes in \S\ref{S6}.1.  Using \(\ref{bink}\) we can then see that if it were possible to effectively decide membership in $S$, then it would also thereby  be possible to effectively decide membership in $A$.     
\end{exa}

\begin{exa}[Singular covers]
\rm
\label{sce}

For motivation, let $(q_{n})_{n\in \N}$ be an enumeration (without repetitions) of the rationals in the unit interval and let $B(x,r) :=(x-r,x+r)$ -- i.e.\ the open ball centered at $x$ with radius $r$. Note that for fixed $m \in \mathbb{N}$, the covering $\bigcup_{n\in \N}B(q_{n }, \frac{1}{2^{n+m+1}})$ has total length at most $\sum_{n=0}^{\infty}|B(q_{n}, \frac{1}{2^{n+m+1}})| = \frac{1}{2^{m}}\sum_{n=0}^{\infty} \frac{1}{2^{n}} = \frac{1}{2^{m}} < 1$.  Constructing a singular cover $\mathcal{S}$ similarly requires that we cover all computable reals $z \in [0,1]^{\textsc{Rec}}$ by effectively giving a sequence containing approximations to $z$ together with a radius sufficient to guarantee they cover $z$.  To this end, let $p_{1}, \dots, p_{m}, \dots$ be an effective enumeration without repetitions of all indices $e\in \N$ to partial computable functions such that ${\phi_{e}}(i)$ is defined for all inputs $i\leq 2^{e+5}$.  Let $\eta:\mathbb{N} \di \mathbb{Q}\cap [0,1]$ be an effective coding of rationals via natural numbers.   Now define $\mathcal{S}$ as the sequence $(A_{n})_{n\in \N}$ of open balls such that $A_{m} := B(\eta\big(\phi_{p_{m}}(2^{p_{m}+4})), \frac{1}{2^{p_{m}+3}})$.   A real $z \in [0,1]^{\textsc{Rec}}$
computed by $\phi_{e}$ then satisfies $|z-\eta(\phi_{e}(2^{e+4}))|<\frac{1}{2^{e+4}}$ by definition, i.e.\ $z \in A_{e}$ as required. 

\end{exa}

%{\footnotesize
%\bibliographystyle{rsl}
%\bibliography{wdeanMain}}

\begin{thebibliography}{}
\enlargethispage{7ex}

\bibitem[\protect\citeauthoryear{Ap{\'e}ry}{Ap{\'e}ry}{1979}]{Apery1979}
Ap{\'e}ry, R. (1979).
\newblock Irrationalit{\'e} de $\zeta(2)$ et $\zeta(3)$.
\newblock {\em Ast{\'e}risque\/}~{\bf 61\/}(11-13), 1.

\bibitem[\protect\citeauthoryear{Beeson}{Beeson}{1985}]{Beeson1985}
Beeson, M. (1985).
\newblock {\em {Foundations of Constructive Mathematics}}.
\newblock Springer-Verlag.

\bibitem[\protect\citeauthoryear{Bernays}{Bernays}{1935}]{Bernays1935aaa}
Bernays, P. (1935).
\newblock {Sur le platonisme dans les math\'{e}matiques}.
\newblock {\em {L'Enseignement Math.}\/}~{\bf 34}, 52--69.

\bibitem[\protect\citeauthoryear{Bobzien, \& Rumfitt}{Bobzien \&
  Rumfitt}{2020}]{Bobzien2020}
Bobzien, S., \& Rumfitt, I. (2020).
\newblock Intuitionism and the modal logic of vagueness.
\newblock {\em Journal of Philosophical Logic\/}~{\bf 49\/}(2), 221--248.

\bibitem[\protect\citeauthoryear{Borel}{Borel}{1895}]{Borel1895}
Borel, {\'E}. (1895).
\newblock Sur quelques points de la th{\'e}orie des fonctions.
\newblock In {\em Annales scientifiques de l'{\'E}cole normale sup{\'e}rieure},
  Volume~12, pp.\  9--55.

\bibitem[\protect\citeauthoryear{Cajori}{Cajori}{1923}]{Cajori1923}
Cajori, F. (1923).
\newblock {Grafting of the Theory of Limits on the Calculus of Leibniz}.
\newblock {\em American Mathematical Monthly\/}~{\bf 30\/}(5), 223--234.

\bibitem[\protect\citeauthoryear{Clark}{Clark}{2019}]{Clark2019}
Clark, P.~L. (2019).
\newblock The instructor's guide to real induction.
\newblock {\em Mathematics Magazine\/}~{\bf 92\/}(2), 136--150.

\bibitem[\protect\citeauthoryear{Coquand}{Coquand}{1992}]{Coquand1992}
Coquand, T. (1992).
\newblock Constructive topology and combinatorics.
\newblock In {\em Constructivity in Computer Science: Summer Symposium, 1991},
  pp.\  159--164. Springer.

\bibitem[\protect\citeauthoryear{Dean}{Dean}{2018}]{Dean2018a}
Dean, W. (2018).
\newblock Strict finitism, feasibility, and the sorites.
\newblock {\em The Review of Symbolic Logic\/}~{\bf 11\/}(2), 295--346.

\bibitem[\protect\citeauthoryear{Dean, \& Walsh}{Dean \&
  Walsh}{2017}]{Dean2017b}
Dean, W., \& Walsh, S. (2017).
\newblock The prehistory of the subsystems of second-order arithmetic.
\newblock {\em The Review of Symbolic Logic\/}~{\bf 10\/}(2), 357--396.

\bibitem[\protect\citeauthoryear{Dean}{Dean}{2019}]{Dean2019}
Dean, W. (2019).
\newblock Computational complexity theory and the philosophy of mathematics.
\newblock {\em Philosophia Mathematica\/}~{\bf 27\/}(3), 381--439.

\bibitem[\protect\citeauthoryear{Dugac}{Dugac}{1989}]{Dugac1989}
Dugac, P. (1989).
\newblock {Sur la correspondance de Borel et le th{\'e}or{\`e}me de
  Dirichlet-Heine-Weierstrass-Borel-Schoenflies-Lebesgue}.
\newblock {\em Archives internationales d'histoire des sciences\/}~{\bf
  39\/}(122), 69--110.

\bibitem[\protect\citeauthoryear{Dummett}{Dummett}{1975}]{Dummett1975}
Dummett, M. (1975).
\newblock Wang's {P}aradox.
\newblock {\em Synthese\/}~{\bf 30\/}(3/4), 301--324.

\bibitem[\protect\citeauthoryear{Dzhafarov}{Dzhafarov}{2019}]{Dzhafarov2019}
Dzhafarov, D.~D. (2019).
\newblock A note on the reverse mathematics of the sorites.
\newblock {\em The Review of Symbolic Logic\/}~{\bf 12\/}(1), 30--36.

\bibitem[\protect\citeauthoryear{Dzhafarov, \& Mummert}{Dzhafarov \&
  Mummert}{2022}]{Dzhafarov2022}
Dzhafarov, D.~D., \& Mummert, C. (2022).
\newblock {\em Reverse Mathematics: Problems, Reductions, and Proofs}.
\newblock Springer.

\bibitem[\protect\citeauthoryear{Eastaugh}{Eastaugh}{2024}]{Eastaugh2024}
Eastaugh, B. (2024).
\newblock {Reverse Mathematics}.
\newblock In Zalta, E.~N. \& Nodelman, U., editors, {\em The {Stanford}
  Encyclopedia of Philosophy\/} ({S}pring 2024 ed.). Metaphysics Research Lab,
  Stanford University.

\bibitem[\protect\citeauthoryear{Fine}{Fine}{1975}]{Fine1975}
Fine, K. (1975).
\newblock {Vagueness, truth and logic}.
\newblock {\em Synthese\/}~{\bf 30\/}(3), 265--300.

\bibitem[\protect\citeauthoryear{H\"older}{H\"older}{1901}]{Holder1901a}
H\"older, O. (1901).
\newblock Die axiome der quantit\"at und die lehre vom mass.
\newblock {\em Berichte \"uber die Verhandlungen der K\"oniglich S\"achsischen
  Gesellschaft, Mathematisch-Physikaliche\/}~{\bf 53}, 1--64.

\bibitem[\protect\citeauthoryear{Keefe}{Keefe}{2000}]{Keefe2000}
Keefe, R. (2000).
\newblock {\em {Theories of vagueness}}.
\newblock Cambridge University Press.

\bibitem[\protect\citeauthoryear{Krantz}{Krantz}{1968}]{Krantz1968}
Krantz, D. (1968).
\newblock A survey of measurement theory.
\newblock In Dantzig, G. \& Cwinorr, D., editors, {\em Mathematics of the
  decision sciences}, Volume~2, pp.\  314--350. American Mathematical Society.

\bibitem[\protect\citeauthoryear{Krantz, Luce, Suppes, \& Tversky}{Krantz
  et~al.}{1971}]{Krantz1971}
Krantz, D., Luce, R., Suppes, P., \& Tversky, A. (1971).
\newblock {\em {Foundations of measurement: Additive and polynomial
  representations}}, Volume~{I}.
\newblock Academic Press.

\bibitem[\protect\citeauthoryear{Lebesgue}{Lebesgue}{1904}]{Lebesgue1904}
Lebesgue, H. (1904).
\newblock {\em {Le{\c{c}}ons sur l'int{\'e}gration et la recherche des
  fonctions primitives}}.
\newblock Gauthier-Villars.

\bibitem[\protect\citeauthoryear{Leibniz}{Leibniz}{1920}]{Leibniz1920a}
Leibniz, G.~W. (1920).
\newblock {\em The early mathematical manuscripts of Leibniz}.
\newblock Open Court.

\bibitem[\protect\citeauthoryear{Lorenzen}{Lorenzen}{1971}]{Lorenzen1971}
Lorenzen, P. (1971).
\newblock {\em Differential and Integral: A Constructive Introduction to
  Classical Analysis}.
\newblock University of Texas Press.

\bibitem[\protect\citeauthoryear{Moss, \& Roberts}{Moss \&
  Roberts}{1968}]{Moss1968a}
Moss, R., \& Roberts, G. (1968).
\newblock {A Creeping Lemma}.
\newblock {\em American Math. Monthly\/}~{\bf 75\/}(6), 649--652.

\bibitem[\protect\citeauthoryear{Normann, \& Sanders}{Normann \&
  Sanders}{2020}]{Normann2020}
Normann, D., \& Sanders, S. (2020, 10).
\newblock {Open sets in computability theory and reverse mathematics}.
\newblock {\em Journal of Logic and Computation\/}~{\bf 30\/}(8), 1639--1679.

\bibitem[\protect\citeauthoryear{Oms, \& Zardini}{Oms \&
  Zardini}{2019}]{Oms2019}
Oms, S., \& Zardini, E., editors (2019).
\newblock {\em The sorites paradox}.
\newblock Cambridge University Press.

\bibitem[\protect\citeauthoryear{Priest}{Priest}{2006}]{Priest2006b}
Priest, G. (2006).
\newblock {\em In Contradiction: A Study of the Transconsistent}.
\newblock Clarendon Press.

\bibitem[\protect\citeauthoryear{Rudin}{Rudin}{1976}]{Rudin1976}
Rudin, W. (1976).
\newblock {\em Principles of {M}athematical {A}nalysis\/} (Third ed.).
\newblock New York: McGraw.

\bibitem[\protect\citeauthoryear{Simpson}{Simpson}{2009}]{Simpson2009}
Simpson, S. (2009).
\newblock {\em Subsystems of second order arithmetic}.
\newblock Cambridge University Press.

\bibitem[\protect\citeauthoryear{Solomon}{Solomon}{1998}]{Solomon1998a}
Solomon, R. (1998).
\newblock {Reverse Mathematics and Fully Ordered Groups}.
\newblock {\em Notre Dame Journal of Formal Logic\/}~{\bf 39\/}(2), 157 -- 189.

\bibitem[\protect\citeauthoryear{Troelstra, \& van Dalen}{Troelstra \& van
  Dalen}{1988}]{Troelstra1988}
Troelstra, A., \& van Dalen, D. (1988).
\newblock {\em {Constructivism in Mathematics, An Introduction, Volume 1}}.
\newblock Amsterdam: North-Holland.

\bibitem[\protect\citeauthoryear{Truesdell}{Truesdell}{1960}]{Truesdell1960}
Truesdell, C. (1960).
\newblock {\em The rational mechanics of flexible or elastic bodies: 1638-1788,
  Introduction to L. Euler's \textsl{Opera Omnia}}, Volume 10-11 of {\em 2}.
\newblock Leipzig: Turici.

\bibitem[\protect\citeauthoryear{Weber}{Weber}{2021}]{Weber2021}
Weber, Z. (2021).
\newblock {\em Paradoxes and inconsistent mathematics}.
\newblock Cambridge University Press.

\bibitem[\protect\citeauthoryear{Weber, \& Colyvan}{Weber \&
  Colyvan}{2010}]{Weber2010}
Weber, Z., \& Colyvan, M. (2010).
\newblock A topological sorites.
\newblock {\em The Journal of Philosophy\/}~{\bf 107\/}(6), 311--325.

\bibitem[\protect\citeauthoryear{Williamson}{Williamson}{1994}]{Williamson1994}
Williamson, T. (1994).
\newblock {\em Vagueness}.
\newblock Routledge.

\bibitem[\protect\citeauthoryear{Wright}{Wright}{2019}]{Wright2019a}
Wright, C. (2019).
\newblock Intuitionism and the sorites.
\newblock In {\em \textnormal{\cite{Oms2019}}}, pp.\  95--117.

\end{thebibliography}

{\footnotesize

}

\end{document}